  \tikzstyle{block} = [rectangle, draw,
  \tikzstyle{line} = [draw, -latex']
\newtheorem{lemma}{Lemma}[section]
\newtheorem{theorem}[lemma]{Theorem}
\newtheorem{proposition}[lemma]{Proposition}
\newtheorem{prop}[lemma]{Proposition}
\newtheorem{cor}[lemma]{Corollary}
\newtheorem{conj}[lemma]{Conjecture}
\newtheorem{claim*}{Claim}
\newtheorem{defn}[lemma]{Definition}
\theoremstyle{definition}
\newtheorem{remark}[lemma]{Remark}
\newtheorem{rmk}[lemma]{Remark}
\newcommand{\cB}{\mathcal{B}}
\newcommand{\Q}{{\mathbb Q}}
\newcommand{\Z}{{\mathbb Z}}
\newcommand{\fp}{\mathfrak{p}}
\newcommand{\frakf}{{\mathfrak f}}
\newcommand{\frapk}{{\mathfrak p}}
\newcommand{\frakq}{{\mathfrak q}}
\DeclareMathOperator{\Gal}{Gal}
\DeclareMathOperator{\GL}{GL}
\numberwithin{equation}{section}
\numberwithin{table}{section}
\title{Non-trivial Integer Solutions of  $x^r+y^r=Dz^p$}
\author{Yasemin Kara}
\author{Diana Mocanu}
\author{Ekin \"Ozman}
\address{Bogazici University\\
Department of Mathematics\\
Bebek, 34342 \\
Instanbul, Turkey.}
\email{yasemin.kara@bogazici.edu.tr}
\address{Max Planck Institute for Mathematics\\
Vivatsgasse 7, 53111\\
Bonn, Germany.}
\email{diana.mocanu97@outlook.com}
\address{Bernoulli Institute \\
Nijenborgh 9, 9747 AG \\
Groningen, the Netherlands.}
\email{e.ozman@rug.nl}
\subjclass[2020]{11D41, 11G05}
\keywords{}
\begin{document}

	\maketitle

	\begin{abstract}
In this paper, we use the modular method over totally real fields together with some standard conjectures (the Weak Frey--Mazur Conjecture and the Eichler--Shimura Conjecture) to prove that infinitely many equations of the type $x^r+y^r=Dz^p$ do not have any non-trivial primitive integer solutions, where $r \geq 5$ is a fixed prime, whenever $p$ is large enough. For $r \equiv 3 \pmod 4$, we get the same result with only assuming the Weak Frey--Mazur Conjecture. 
	\end{abstract}

\section{Introduction}

Let \( r \geq 5 \) be a fixed rational prime, \( p \) a rational prime different from \( r \), and \( \gcd(r, D) = 1 \). Assume \( p \) is large enough that \( D \) contains no \( p \)th power. The equation 
\begin{equation}
    \label{maineqn}
x^r + y^r = Dz^p, \quad x, y, z \in \mathbb{Z},
\end{equation}
is referred to as the \emph{generalized Fermat equation with signature \( (r, r, p) \) and coefficient \( D \)}. A solution \( (x, y, z) \) is called \emph{primitive} if \( \gcd(x, y, z) = 1 \), and \emph{trivial} if \( xyz = 0 \) or \( z = \pm 1 \). Note that we extended the standard definition of trivial solution in the literature to exclude situations of the form $x=1$, $D=\pm(1+y^r)$ and $z=\pm 1$. %Let  $r \geq 5 $ be a fixed rational prime number, $p$ be a rational prime number different than $r$, and $\gcd(r,D)=1$. We assume that $p$ is large enough such that there is no integer $a$ with $a^p \mid D$ i.e. $D$ does not contain any $p$th power.
%We refer the equation \begin{equation}\label{maineqn} x^r+y^r=Dz^p, \quad x,y,z\in \Z \end{equation}
%as \textit{the Fermat equation over $\Q$ with signature $(r,r,p)$ and coefficient $D$}.  Let $(x,y,z)$ be a solution to $x^r+y^r=Dz^p$  . We say that
%$(x,y,z)$ is a \textit{primitive} solution if $(x,y,z)=1$ and a \textit{trivial} solution if $xyz =0 $ or $z=\pm 1$. 

A summary of the instances of \eqref{maineqn} that have been solved can be found in Table \ref{my-table2}.  The goal of this article is two-fold. On one hand, we aim to complete this resolution, at the expense of assuming some standard (nevertheless strong) conjectures in the field. On the other hand, we intend to offer a short survey of this variant of the modular method over totally real fields, specifically focusing on the use of the Eichler--Shimura correspondence in conjunction with the Frey--Mazur conjecture. While this approach is familiar to experts, it seems that it has not been extensively discussed in the existing literature.

The generalized Fermat equation of signature $(r,r,p)$ has been fruitfully studied in recent years using various generalizations of the so-called \textit{modular method}. This is the approach pioneered by Andrew Wiles in the famous proof of Fermat's Last Theorem \cite{AW95}, building on the work of Frey, Mazur, Ribet, Serre, and many others.
The modular method for attacking Diophantine equations (usually parametrized by a varying prime exponent $p$) can be summarized in three steps.
\begin{itemize}
	\item[\textbf{Step 1.}] \textbf{Constructing a Frey elliptic curve.} Attach to a putative solution (of some Diophantine equation) an elliptic curve $E/K$, for $K$ an appropriately chosen totally real number field. We require $E$ to have the minimal discriminant $\Delta=CB^p$, where $B,C \in K$ and $C$ is independent of the solution. More formally, one requires that the residual Galois representations $\bar{\rho}_{E,p}$ has bounded ramification which is independent of the putative solution. For example, given a solution of Fermat's equation $x^p+y^p=z^p$, $K=\Q$ and the Frey elliptic curve is given by $E: Y^2=X(X-x^p)(X+y^p)$. 
     
\item[\textbf{Step 2.}]\textbf{Modularity/Level lowering.} Prove modularity of $E/K$ and irreducibility of the residual Galois representations $\bar{\rho}_{E,p}$ attached to $E$, to conclude (via level lowering results), that $\bar{\rho}_{E,p} \sim \bar{\rho}_{\mathfrak{f},p}$ where $\mathfrak{f}$ is a Hilbert newform with rational Hecke eigenvalues and level $\mathcal{N}_p$, depending only on $C$. For example, in the setting of Fermat's Last Theorem, Wiles' Modularity Theorem together with Ribet's Level Lowering Theorem give rise to a classical newform $f$ of weight $2$ and level $N_p=2$.
 \item[\textbf{Step 3.}]\textbf{Elimination.} Prove that among the finitely many Hilbert newforms  $\mathfrak{f}$ predicted above, none of them satisfies $\bar{\rho}_{E,p} \sim \bar{\rho}_{\mathfrak{f},p}$. Continuing our example, we get a contradiction to the existence of solutions to Fermat's equation by simply noting that there are no newforms of weight $2$ and level $2$.
\end{itemize}
\begin{table}[h]
\resizebox{\textwidth}{!}{%
\begin{tabular}{|l|l|l|l|}
\hline
$r$                                                                                                                           & $D$          & References                                                     \\ \hline                     
3 & 1 & Freitas \cite{Freitas33p}, Kraus* \cite{Kraus33p}                                       \\ \hline
5                                                                                                                             & 1,2          & \begin{tabular}[c]{@{}l@{}}Billerey, Chen, Dembele, Dieulfait,  Freitas* \cite{BCDDF22}\end{tabular}  \\ \hline
5,13                                                                                                                          & 3            & \begin{tabular}[c]{@{}l@{}}Billerey, Chen, Dieulfait, Freitas \cite{BCDF19}\end{tabular}                                                             \\ \hline
7                                                                                                                             & 3            & \begin{tabular}[c]{@{}l@{}}Billerey, Chen, Dieulfait, Freitas, Najman \cite{BCDFN}, Freitas \cite{F}\end{tabular}                                                                                                                 \\ \hline
11                                                                                                                            & 1            & \begin{tabular}[c]{@{}l@{}}Billerey, Chen, Dieulfait, Freitas, Najman* \cite{BCDFN}\end{tabular}           \\ \hline
\begin{tabular}[c]{@{}l@{}} $r \in \{7,13,19,23,37,47,59, $\\ 61,67,71, 79,83,101,103,107,\\ 131,139,149$\}$ \end{tabular} & $1$            & Mocanu* \cite{rrp}                                                                                                               \\ \hline
$r \nmid D$                                                                                                                   & odd, $\neq1$ & Freitas, Najman* \cite{FNaj}  \\ \hline                                                                                              
\end{tabular}%
}
\vspace{0.5cm}  % Add vertical space to move the caption lower
\caption{\small Unconditionally solved instances of $x^r+y^r=Dz^p$ for $p$ large enough. The upper script * stands for results subject to local conditions.}
\label{my-table2}
\end{table}
In general, in the last step, one has to compute all Hilbert newforms at a given level, which is a computationally difficult task. Various approaches in literature consist in creative ways of tackling Step 3 without computing Hilbert newforms. In some situations, a converse of modularity is known to hold, namely an Eichler--Shimura correspondence, which relates rational Hilbert newforms back to elliptic curves. In these situations, one attaches a second elliptic curve $E'$ which has conductor independent of the solution (more precisely, it equals the quantity $\mathcal{N}_p$ described in Step 2). In this way, one converts the problem of eliminating Hilbert newforms into a problem about eliminating elliptic curves with special behaviour at primes of bad reduction. There are various ways in the literature to tackle the elimination of such elliptic curves.
For example, in \cite{FNaj} the authors use the study of symplectic types of elliptic curves, while in \cite{rrp}, the author uses the theory of $S$-unit equations coming from  parametrizations of elliptic curves with specific behaviour at bad primes. In this article, we will show that whenever $p$ is large enough, one can eliminate elliptic curves using the Weak Frey--Mazur Conjecture (Conjecture \ref{weakFMconj}).

Another popular direction of attacking this problem originated in Darmon's program and involves using Frey abelian varieties of higher dimensions (in place of Frey elliptic curves). Recent work of Billerey, Chen, Demb\'el\'e, Dieulefait, Freitas and Najman \cite{BCDFN}, \cite{BCDF19},\cite{BCDDF22} use recent advancements on the modularity of Galois representations coming from Frey abelian varieties of $\GL_2$-type to get asymptotic resolutions for equation \eqref{maineqn}. 
Recently, the authors in \cite {FNaj} proved that for infinitely many integers $D$, the above equation has no non-trivial primitive solutions such that $2\mid x + y $ or $r\mid x + y$, for a set of exponents $p$ of positive density. 

\subsection{Our Results}
We employ the modular method as outlined in the previous section. In Step 1, we use Freitas' recipes \cite{F} to construct Frey elliptic curves over certain totally real fields depending on 
$r$. The second step relies on modularity and level lowering results for elliptic curves over totally real fields from \cite{FSasymp}. In Step 3, we apply various results on the Eichler--Shimura correspondence and image of inertia comparison arguments in order to transform the problem into one about eliminating elliptic curves. Then, 
we apply a weak form of the Frey--Mazur conjecture (Conjecture \ref{weakFMconj}, \cite{MR1638494FreyMazur}) to establish the following theorem.
%We will employ the modular method as described in the previous section. For Step 1, we will use the recipes given by Freitas in \cite{F} to construct appropriate Frey curve over certain totally real fields (which depend on $r$). For Step 2 we use theoretical results about the modularity of elliptic curves over totally real fields described in \cite{FSasymp}. Lastly, for Step 3 we use aweak version of the Frey--Mazur conjecture (Conjecture \ref{weakFMconj}, \cite{MR1638494FreyMazur}), Eichler--Shimura type theorems and image of inertia comparison arguments, to get the following theorem. 
\begin{theorem}[First Main Theorem]\label{thm:main1}
    Let $r \geq 5$ be a fixed rational prime and $D$ be a nonzero integer with $\gcd(D,r)=1$. Assume the Weak Frey--Mazur Conjecture (Conjecture \ref{weakFMconj}). Then,
    \begin{enumerate}
    \item If $r\equiv 3 \mod 4$, then there exists a (non-effective) constant depending on $r$ and $D$, $\cB_{r,D}$, such that for any rational prime $p$ with $p>\cB_{r,D}$, the equation
    $x^r+y^r=Dz^p$
    has no non-trivial, primitive, integer solutions $(x,y,z)$.
    %\diana{I am a bit confused about the fact that solutions of the form $x=1, y=2, z=1, D=1+2^r$ always exist (as long as $\gcd(D,r)=1$). I think we have to exclude $z=1$ somehow? I think all "bounding p" arguments subtly rely on this assumption.}
    \item If $r\equiv 1 \mod 4$, then there exists a (non-effective) constant depending on $r$ and $D$, $\cB'_{r,D}$, such that for any rational prime $p$ with $p>\cB'_{r,D}$, the equation
    $x^r+y^r=Dz^p$
    has no non-trivial, primitive, integer solutions $(x,y,z)$ where $r\mid z$. 
    \end{enumerate}
    
\end{theorem}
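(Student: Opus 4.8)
The plan is to carry out the three-step modular method described above, using the Weak Frey-Mazur Conjecture in place of an explicit enumeration of Hilbert newforms in Step~3. Let $(x,y,z)$ be a non-trivial primitive solution. We may suppose $xyz\neq 0$; if $|z|=1$ the equation becomes $x^r+y^r=\pm D$, which we treat separately, so assume $|z|\ge 2$. Put $K=\Q(\zeta_r)^+$, the maximal totally real subfield of $\Q(\zeta_r)$, of degree $(r-1)/2$. For Step~1 we attach to $(x,y,z)$, following the recipes of Freitas \cite{F}, a Frey elliptic curve $E=E_{x,y}/K$ with the following features: $E$ has potentially multiplicative reduction at every prime $\mathfrak{q}\mid z$ of $\mathcal{O}_K$, with $p\mid v_{\mathfrak{q}}(\Delta_E)$ for the minimal discriminant, so that $v_{\mathfrak{q}}(j(E))=-p\,v_{\mathfrak{q}}(z)$ there; and the conductor of $E$ is divisible only by the primes $\mathfrak{q}\mid z$ together with a fixed finite set $\Sigma=\Sigma(r,D)$ of primes of $K$ above $rD$, with exponents at the primes of $\Sigma$ bounded independently of $(x,y,z)$.

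\textbf{Step 2.} For $p$ large, $E/K$ is modular and $\bar{\rho}_{E,p}$ is irreducible, by the results of \cite{FSasymp}. Level lowering over $K$ then yields a Hilbert newform $\mathfrak{f}$ over $K$ of parallel weight $2$, of level $\mathfrak{N}$ supported on $\Sigma$ (the primes $\mathfrak{q}\mid z$ drop out precisely because $p\mid v_{\mathfrak{q}}(\Delta_E)$), with $\bar{\rho}_{E,p}\sim\bar{\rho}_{\mathfrak{f},p}$. Bounding the exponents of $\mathfrak{N}$ at the primes of $K$ above $r$ (and above $2$) is where the class of $r$ modulo $4$ enters: for $r\equiv 3\bmod 4$ one can arrange this for an arbitrary non-trivial primitive solution, whereas for $r\equiv 1\bmod 4$ — when $[K:\Q]=(r-1)/2$ is even — one needs the hypothesis $r\mid z$. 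In either case $\mathfrak{N}$ divides a fixed ideal $\mathcal{N}=\mathcal{N}(r,D)$, so $\mathfrak{f}$ ranges over a finite set independent of $p$ and of the solution.

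\textbf{Step 3.} Among the finitely many newforms of level dividing $\mathcal{N}$, those whose Hecke eigenvalue field is strictly larger than $\Q$ are eliminated for $p$ large by the familiar comparison of $a_{\mathfrak{q}}(\mathfrak{f})$ with $a_{\mathfrak{q}}(E)$ at a suitable auxiliary prime $\mathfrak{q}\nmid p$ (the resulting congruence forces $p$ to divide a fixed nonzero integer), and those attached to a fake elliptic curve are removed by a standard local argument. Hence, for $p$ large, $\bar{\rho}_{E,p}\cong\bar{\rho}_{E',p}$ for some $E'$ in a fixed finite list $\mathcal{E}=\mathcal{E}(r,D)$ of elliptic curves over $K$. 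Now the Weak Frey-Mazur Conjecture (Conjecture~\ref{weakFMconj}) over $K$ supplies a constant $C_K$, depending only on $K$ and hence only on $r$, such that $p>C_K$ together with $\bar{\rho}_{E,p}\cong\bar{\rho}_{E',p}$ forces $E$ to be isogenous to $E'$. The isogeny class of $E'$ is finite, so $j(E)$ lies in the fixed finite set $\mathcal{J}=\bigcup_{E'\in\mathcal{E}}\{\,j(\widetilde{E}):\widetilde{E}\sim E'\,\}$, and there is a fixed $v_0$ with $|v_{\mathfrak{q}}(\lambda)|\le v_0$ for all $\lambda\in\mathcal{J}$ and all primes $\mathfrak{q}$ of $\mathcal{O}_K$. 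But at a prime $\mathfrak{q}\mid z$ we have $|v_{\mathfrak{q}}(j(E))|=p\,v_{\mathfrak{q}}(z)\ge p$, while isogenous elliptic curves share their primes of multiplicative reduction, where the valuations of the two $j$-invariants differ by a factor dividing the isogeny degree, which is bounded by a fixed $d_0$ over $\mathcal{E}$; hence $p\le d_0 v_0$. Taking $\cB_r$ (resp. $\cB'_r$) to be the largest of the finitely many bounds on $p$ produced above proves the theorem; the constants are non-effective because $C_K$, and the bounds coming from the modularity and irreducibility statements of \cite{FSasymp}, are non-effective.

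\textbf{Main obstacle.} The two places I expect real work are: proving that the level $\mathcal{N}$ is genuinely uniform in $p$ and in the solution, which requires a careful local analysis of $E_{x,y}$ at the primes above $r$ and $2$ and is exactly what produces the $r\equiv 1$ versus $r\equiv 3\bmod 4$ dichotomy (and the extra assumption $r\mid z$ in the first case); and converting the Frey-Mazur conclusion that $E$ is isogenous to a fixed curve into a contradiction — which works only because a genuine solution forces $E$ to be highly degenerate (a multiplicative prime at which $v(j)$ is a large multiple of $p$), whereas a curve drawn from a fixed finite list is not. The modularity and irreducibility inputs over the field $K$ of degree $(r-1)/2$, taken from \cite{FSasymp}, are a substantive ingredient in their own right rather than a formality.
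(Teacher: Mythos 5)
Your overall architecture matches the paper's (Freitas Frey curve over $K=\Q(\zeta_r+\zeta_r^{-1})$, modularity/irreducibility/level lowering from \cite{FSasymp}, reduction to rational eigenvalues, descent to an elliptic curve, then Weak Frey--Mazur), but your endgame contains a genuine gap. You derive the final contradiction from the claim that $E$ has potentially multiplicative reduction at every prime $\mathfrak{q}\mid z$ with $v_{\mathfrak{q}}(j(E))=-p\,v_{\mathfrak{q}}(z)$, so that $j(E)$ has a valuation of absolute value at least $p$ somewhere, which is incompatible with isogeny to a curve from a fixed finite list. This claim is false for the Freitas Frey curves: over $K$ one has $Dz^p=(x+y)\prod_{j=1}^{(r-1)/2}f_j$, and the curve is built from only three of these factors ($f_1,f_2,f_3$ for $E_1$). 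If $\ell\mid z$ with $\ell\neq r$ and $\ell\not\equiv 1\pmod r$, then each prime $\lambda$ of $\Q(\zeta_r)$ above $\ell$ divides exactly one factor $x+\zeta_r^j y$ and is fixed by the Frobenius $\sigma_\ell$, which forces $\ell j\equiv j\pmod r$, i.e.\ $j=0$; so every prime above $\ell$ divides only $x+y$ and none divides $f_1f_2f_3$. If all prime factors of $z$ are of this type, then $A,B,C$ generate ideals of bounded norm, $j(E_1)$ has bounded valuation at every prime, and your inequality $p\le d_0v_0$ never gets off the ground. The paper's contradiction (Proposition \ref{Problematic}) avoids this entirely: $j(E_i)$ lying in a finite set forces $\lambda=-A/B$, hence $f_1/f_2$, hence $w=x/y$, hence the coprime pair $(x,y)$, into finite sets, so $Dz^p=x^r+y^r$ is bounded and $p$ is bounded (given $|z|\ge 2$). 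You need to replace your valuation argument with something of this kind.

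Two smaller points. First, you locate the $r\bmod 4$ dichotomy in the bounding of the conductor exponents at the primes above $r$ and $2$; in fact those exponents are bounded uniformly in both cases (Propositions \ref{prop:semi1} and \ref{prop:semi2}). The dichotomy actually lives in the step you dismiss as ``removing fake elliptic curves by a standard local argument'': when $[K:\Q]=(r-1)/2$ is odd ($r\equiv 3\bmod 4$), Blasius--Hida give the Eichler--Shimura descent for free, whereas when it is even ($r\equiv 1\bmod 4$) one needs the Freitas--Siksek criterion requiring $p\mid\#\bar{\rho}_{E,p}(I_\beta)$, and this is precisely what the hypothesis $r\mid z$ and the second Frey curve $E_2$ (potentially multiplicative at $\beta$ with $p\nmid v_\beta(j_{E_2})$, Propositions \ref{prop:valj} and \ref{prop:inertia}) are for. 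Second, your deferral of the case $|z|=1$ is never discharged, and cannot be by bounding $p$: a coprime solution of $x^r+y^r=\pm D$ with $|xy|>1$ gives a non-trivial primitive solution for every exponent $p$ at once; the paper's own proof is silent on this point as well, so you are not worse off, but ``treated separately'' is not a treatment.
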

\begin{remark}
    We divide the proof into two parts, corresponding to the instances in which the Eichler--Shimura conjecture (Conjecture \ref{ESconj}) is known to hold, as described in
    Therorem \ref{thm:EC}. More details are given in Remark \ref{rmk:r}.
\end{remark}
Assuming the Eichler–-Shimura Conjecture (Conjecture \ref{ESconj}) one gets the following.
\begin{theorem}[Second Main Theorem]\label{thm:main2}
Let $r \geq 5$ be a fixed rational prime such that $r\equiv 1 \mod 4$ and $D$ be a nonzero integer with $\gcd(D,r)=1$. Assume the Weak Frey--Mazur Conjecture (Conjecture \ref{weakFMconj}) and the Eichler–Shimura Conjecture (Conjecture \ref{ESconj}). Then there exists a (non-effective) constant depending on $r$ and $D$, $\cB''_{r,D}$ such that for any rational prime $p$ with $p>\cB''_{r,D}$, the equation
    $$x^r+y^r=Dz^p$$
    has no non-trivial, primitive, integer solutions $(x,y,z)$. 
    
\end{theorem}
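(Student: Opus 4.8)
The plan is to deduce the theorem from the First Main Theorem together with one further application of the modular method, over the totally real field attached to $r$, in which the Eichler--Shimura Conjecture is used to turn the Hilbert newform coming out of level lowering into an honest elliptic curve, the Weak Frey--Mazur Conjecture is used to pin that curve down up to isogeny, and a Thue--Mahler argument closes the case. Since $r\equiv 1\bmod 4$, part~(2) of the First Main Theorem already handles all non-trivial primitive solutions with $r\mid z$ once $p>\cB'_r$, so I would first reduce to the case $r\nmid z$: suppose $(x,y,z)$ is a non-trivial primitive solution with $r\nmid z$, and assume for contradiction that $p$ is large. Following the recipes of Freitas in \cite{F}, attach to $(x,y,z)$ a Frey elliptic curve $E$ over $K=\Q(\zeta_r)^+$; by construction the conductor of $E$ is supported on the primes above $2rD$ together with the primes above $z$, and at each prime $\mathfrak{q}\mid z$ with $\mathfrak{q}\nmid 2r$ the curve $E$ has multiplicative reduction with $p\mid v_{\mathfrak{q}}(\Delta_E)$. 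By the modularity results of \cite{FSasymp}, $E/K$ is modular for $p$ large; and $\bar{\rho}_{E,p}$ is (absolutely) irreducible for $p$ large, uniformly in the solution. Level lowering (Fujiwara--Jarvis--Rajaei) then produces a Hilbert newform $\mathfrak{f}$ over $K$ of parallel weight $2$, trivial character, and level dividing a fixed ideal $\mathcal{N}$ supported only on the primes above $2rD$ (independent of the solution and of $p$), together with a prime $\mathfrak{p}\mid p$ of the Hecke eigenvalue field $\Q_{\mathfrak{f}}$, such that $\bar{\rho}_{E,p}\sim\bar{\rho}_{\mathfrak{f},\mathfrak{p}}$.

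Next I would split on $\Q_{\mathfrak{f}}$; note there are only finitely many candidate forms $\mathfrak{f}$, the level being bounded. If $[\Q_{\mathfrak{f}}:\Q]>1$, pick a prime $\mathfrak{q}$ of $K$ with $a_{\mathfrak{q}}(\mathfrak{f})\notin\Q$; for $p$ large one has $\mathfrak{q}\nmid p\mathcal{N}$, and comparing traces of $\Frob_{\mathfrak{q}}$ under $\bar{\rho}_{E,p}\sim\bar{\rho}_{\mathfrak{f},\mathfrak{p}}$ shows that $\Norm_{\Q_{\mathfrak{f}}/\Q}\!\bigl(a_{\mathfrak{q}}(E)-a_{\mathfrak{q}}(\mathfrak{f})\bigr)$ is a nonzero rational integer, bounded in terms of $\mathfrak{q}$ and $[K:\Q]$ alone yet divisible by $p$ --- impossible for $p$ large. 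This disposes of the irrational case \emph{unconditionally}. If $\Q_{\mathfrak{f}}=\Q$, I would invoke the Eichler--Shimura Conjecture (Conjecture~\ref{ESconj}) to obtain an elliptic curve $E'/K$ with $\bar{\rho}_{E',p}\sim\bar{\rho}_{\mathfrak{f},p}\sim\bar{\rho}_{E,p}$ and conductor dividing $\mathcal{N}$, and then the Weak Frey--Mazur Conjecture (Conjecture~\ref{weakFMconj}), applied with the constant attached to $K$ (hence depending only on $r$), to conclude that $E$ and $E'$ are isogenous over $K$ for $p$ large. In particular $E$ and $E'$ have the same conductor and the same set of primes of multiplicative reduction.

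Finally I would derive the contradiction by a Thue--Mahler argument. From $\cond(E)=\cond(E')\mid\mathcal{N}$, which is supported only on the primes above $2rD$, together with the fact that $E$ has multiplicative reduction at every prime above a rational prime $q\mid z$ with $q\nmid 2r$, it follows that $z$ is (up to sign) a product of primes in the fixed finite set $S=\{2\}\cup\{\text{prime divisors of }D\}$; note $r\notin S$ since $r\nmid z$. Factoring $x^r+y^r=(x+y)\,F(x,y)$ with $F(x,y)=x^{r-1}-x^{r-2}y+\cdots+y^{r-1}$, both factors divide $Dz^p$ and are therefore $S$-units up to sign, so the binary form $F$ --- which has degree $r-1\ge 6$ and pairwise distinct roots --- takes an $S$-unit value at the coprime pair $(x,y)$. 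By the Thue--Mahler theorem the set of such pairs $(x,y)$ is finite and independent of $p$, and for each of them $z^p=(x^r+y^r)/D$ is a fixed nonzero integer, which can be a $p$-th power only for $p$ bounded --- a contradiction. Taking $\cB''_r$ to be the maximum of $\cB'_r$ and the (non-effective) bounds produced along the way then proves the theorem. I expect the main obstacle to be Step~2, namely modularity of $E/K$ and, above all, the irreducibility of $\bar{\rho}_{E,p}$ for large $p$ \emph{uniformly in the solution}; granting these and the two conjectures, the remaining steps are comparatively soft, the only substantial classical input being the Thue--Mahler theorem.
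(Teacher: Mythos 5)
Your overall strategy is sound and it departs from the paper's proof in two genuine ways. First, you reduce to the case $r\nmid z$ via part (2) of the First Main Theorem; the paper instead reruns its whole argument for an arbitrary solution, simply replacing the appeal to the known Eichler--Shimura cases (Theorem \ref{thm:EC}), whose inertia hypothesis at $\beta$ is what forced $r\mid z$ in the first theorem, by Conjecture \ref{ESconj}. Second, and more substantially, your endgame is different: after Frey--Mazur the paper concludes that $j_{E}$ lies in a fixed finite set of $j$-invariants and contradicts this with Proposition \ref{Problematic} (which shows the Frey $j$-invariant escapes any fixed finite set once $p$ is large), whereas you use only the equality of conductors along the isogeny, deduce that the solution is supported on a fixed finite set $S$ of primes, and finish with Thue--Mahler. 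Your route gives a more Diophantine finish and needs less than the full isogeny statement; the paper's route avoids Thue--Mahler altogether by recycling Proposition \ref{Problematic}, which it needs anyway for modularity. The treatment of forms with $\Q_\frakf\neq\Q$ is the same standard trace argument in both.

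There is, however, one genuine inaccuracy in the step where you bound $z$. For the Frey curves given by Freitas's recipes over $K=\Q(\zeta_r+\zeta_r^{-1})$ (the ones used in this paper), the discriminant is $16(ABC)^2$, where $A,B,C$ involve only two or three of the factors $f_j=x^2+\alpha_jxy+y^2$ (and possibly $(x+y)^2$), not the full factorization of $x^r+y^r$. So it is false that $E$ has multiplicative reduction at \emph{every} prime $\frakq\mid z$ with $\frakq\nmid 2r$: a prime of $K$ dividing, say, only $f_4$ (or dividing only $x+y$, for the curve built from $f_1,f_2,f_3$) does not divide $ABC$ and is a prime of good reduction. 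Consequently ``$\cond(E)$ supported above $2rD$ implies $z$ is an $S$-unit'' does not follow as written. The repair is short but needed: since $\Gal(K/\Q)$ permutes the $f_j$ transitively, any rational prime $q\mid F(x,y)=(x^r+y^r)/(x+y)$ with $q\nmid 2rD$ has \emph{some} prime of $K$ above it dividing $f_1$, hence dividing $A$ (or $B$) and the conductor; so the conductor comparison does force $F(x,y)$ to be an $S$-unit up to sign, which is all your Thue--Mahler step actually requires (you cannot, and need not, conclude this for $z$ or $x+y$ themselves when using the $f_1,f_2,f_3$-curve; alternatively, choose the Frey curve containing $(x+y)^2$ among its three terms and again invoke Galois conjugacy). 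With that adjustment your argument goes through. A final shared caveat: your closing step ``a fixed nonzero integer can be a $p$-th power only for bounded $p$'' tacitly assumes $|z|\geq 2$, so putative solutions with $z=\pm1$ are not excluded; the paper's use of Proposition \ref{Problematic} has the same feature, so this is not a defect specific to your approach.
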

\begin{remark}
    The constants $\mathcal{B}_{r,D}$, $\mathcal{B}_{r,D}'$, $\mathcal{B}_{r,D}''$ are ineffective as they depend on the constant in the Weak Frey--Mazur Conjecture (Conjecture \ref{weakFMconj}). 
\end{remark}
\subsection{Acknowledgements.} The authors are sincerely grateful to Samir Siksek for useful discussions and suggestions. The first and last authors are partially supported by TUBITAK Project Number 122F413.  The first author is also supported by Bogazici University Research Fund Grant Number 19082. The second author was involved in writing this article while transitioning from University of Warwick to Max Planck Institute for Mathematics in Bonn, and thus she is thankful to both institutions for their kind support.
\newpage

\section{Frey Curve and Related Facts}\label{frey}

This section follows standard notations and ideas which were constructed in full generality in \cite{F}.  We included the details here for the convenience of the reader. 
Let  $r \geq 5$ be a fixed rational prime. We denote by $\zeta_r$  a primitive $r$th root of unity, so $\mathbb Q(\zeta_r)$ is the $r$th cyclotomic field which has degree $r-1$ over $\Q$. Let $K$ be the maximal totally real subfield of $\mathbb Q(\zeta_r)$, namely $K:=\Q(\zeta_r+\zeta_r^{-1})$. Note that $K$ is an abelian extension of $\Q$ of degree $\frac{r-1}{2}.$ We write $\mathcal O_K$ for the ring of integers of $K$, then $\mathcal O_K=\Z[\zeta_r+\zeta_r^{-1}]$.  The prime $r$ is totally ramified in $K$ and we denote the unique prime lying over $r$ with $\beta$.  Hence
\begin{equation}\label{beta}
    r\mathcal O_K= \beta^{(r-1)/2}, \quad \beta=(1-\zeta_r)(1-\zeta^{-1}_r)\mathcal O_K.
\end{equation}
 Since $\beta$ is the unique ideal above $r$ we can also write $\beta=(1-\zeta_r^{i})(1-\zeta_r^{-i})\mathcal O_K$ for any $0\leq i \leq (r-1)$ via conjugation by elements of $\text{Gal}(K/\Q)$.  
We denote by $v_\beta(\cdot)$ the valuation of $\cdot$ at $\beta$.
%For the rest of this section, we relate the equation $x^r+y^r=Dz^p$ to several homogenous quadratic equations that we will use to construct a Frey curve with the desired properties.

Famous work of Darmon (\cite{Darmon}) shows that there is no rational Frey elliptic curve attached to $x^r+y^r=Dz^p$.
Thus, we would like to replicate the construction of the classical Frey elliptic curve in Wiles' original proof by constructing a similar looking curve over a larger number field.  
Therefore, in the remainder of this section, we connect the equation $x^r+y^r=Dz^p$ to a series of homogeneous quadratic equations.
We start by factorizing the right-hand side of equation $Dz^p=x^r+y^r$  and get that

	$$Dz^p=(x+y)\prod\limits_{1 \leq j \leq  (r-1)/2} (x+\zeta_r^jy)(x+\zeta_r^{-j}y)=(x+y)\prod\limits_{1 \leq j \leq (r-1)/2}(x^2+(\zeta_r^j + \zeta_r^{-j})xy+y^2).$$ For $0 \leq j \leq (r-1)/2$ we denote $\alpha_j:=\zeta_r^j + \zeta_r^{-j}= 2 \cos(\frac{2\pi j}{r}) \in \mathcal O_K$ where $K$ is as above.
Thus, we get that

$$Dz^p=(x+y)\prod \limits_{1 \leq j \leq (r-1)/2}f_j(x,y),$$ where $f_j(x,y):=(x^2+y^2)+\alpha_j xy$, for $0\leq j\leq (r-1)/2$, in particular $f_0(x,y)=(x+y)^2$. For simplicity, from now on we will write $f_i$ in place of $f_i(x,y).$

The following lemmas explain how the factorizations of distinct $f_i$s relate to each other.

\begin{lemma}\label{lem:alpha}
	For any $0 \leq k < j \leq  \frac{r-1}{2}$, the difference $\alpha_k-\alpha_j$ is divisible by $\beta$ only once i.e. $v_\beta(\alpha_k-\alpha_j)=1$.
\end{lemma}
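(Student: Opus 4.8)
The plan is to reduce the statement to a computation in the completion $K_\beta$, or equivalently (since $\beta$ is totally ramified over $r$) to a computation involving $\beta$-adic valuations of algebraic integers in $\mathcal{O}_K = \Z[\zeta_r + \zeta_r^{-1}]$. First I would recall from \eqref{beta} that $v_\beta(r) = (r-1)/2$ and that $\beta$ is principal, generated by $(1-\zeta_r)(1-\zeta_r^{-1})$; more generally $\beta = (1 - \zeta_r^i)(1-\zeta_r^{-i})\mathcal{O}_K$ for all $i$ coprime to $r$. Writing $\alpha_k - \alpha_j = (\zeta_r^k + \zeta_r^{-k}) - (\zeta_r^j + \zeta_r^{-j})$, I would factor this difference of cyclotomic quantities. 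A clean way: in $\Z[\zeta_r]$ one has $\alpha_k - \alpha_j = \zeta_r^k + \zeta_r^{-k} - \zeta_r^j - \zeta_r^{-j} = \zeta_r^{-k}(\zeta_r^{2k} - 1) - \zeta_r^{-j}(\zeta_r^{2j}-1)$, but a more symmetric factorization is
\[
\alpha_k - \alpha_j = (\zeta_r^k - \zeta_r^j)(1 - \zeta_r^{-k-j}) \cdot \zeta_r^{-k} \quad\text{(up to a unit)},
\]
which one checks by expanding; indeed $(\zeta_r^k - \zeta_r^j)(1 - \zeta_r^{-k-j}) = \zeta_r^k - \zeta_r^{k - k - j} - \zeta_r^j + \zeta_r^{j - k - j} = \zeta_r^k + \zeta_r^{-k} - \zeta_r^j - \zeta_r^{-j}$ exactly. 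So $\alpha_k - \alpha_j = (\zeta_r^k - \zeta_r^j)(1 - \zeta_r^{-(k+j)})$ on the nose in $\Z[\zeta_r]$.

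Next I would compute the $\mathfrak{r}$-adic valuation of each factor in $\Z[\zeta_r]$, where $\mathfrak{r} = (1-\zeta_r)$ is the unique prime of $\Q(\zeta_r)$ above $r$, with $e(\mathfrak{r}/r) = r - 1$. The key standard fact is that $1 - \zeta_r^a$ has $\mathfrak{r}$-adic valuation exactly $1$ for every $a \not\equiv 0 \pmod r$ (since $(1-\zeta_r^a)/(1-\zeta_r)$ is a cyclotomic unit). Hence $v_\mathfrak{r}(1 - \zeta_r^{-(k+j)}) = 1$ provided $k + j \not\equiv 0 \pmod r$, and $v_\mathfrak{r}(\zeta_r^k - \zeta_r^j) = v_\mathfrak{r}(1 - \zeta_r^{j-k}) = 1$ provided $j - k \not\equiv 0 \pmod r$. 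For the range $0 \leq k < j \leq (r-1)/2$ we have $0 < j - k < r$ and $0 < j + k < r$, so both factors have $\mathfrak{r}$-adic valuation exactly $1$, giving $v_\mathfrak{r}(\alpha_k - \alpha_j) = 2$. Finally I would descend from $\Q(\zeta_r)$ to $K$: since $\mathfrak{r}$ lies above $\beta$ with ramification index $e(\mathfrak{r}/\beta) = 2$ (as $[\Q(\zeta_r):K] = 2$ and $r$ is totally ramified in both), and since $\alpha_k - \alpha_j \in \mathcal{O}_K$, we get $v_\beta(\alpha_k - \alpha_j) = \tfrac{1}{2} v_\mathfrak{r}(\alpha_k - \alpha_j) = 1$, which is the claim.

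An alternative route that stays entirely within $K$ (perhaps cleaner for the paper) is to note $\alpha_k - \alpha_j = (\alpha_1 - \alpha_0) \cdot u$ type relations are false in general, so instead one writes $\alpha_k - \alpha_j = -(\zeta_r^k - \zeta_r^j)(\zeta_r^{-k} - \zeta_r^{-j})$... but wait, that product is a norm from $\Q(\zeta_r)$ down to $K$ of $(1-\zeta_r^{j-k})$ up to units, hence equals $\pm\beta \cdot (\text{unit})$ when $j - k \not\equiv 0$, directly by \eqref{beta} applied with $i = j-k$. Let me double check the sign and the cross terms: $-(\zeta_r^k - \zeta_r^j)(\zeta_r^{-k} - \zeta_r^{-j}) = -(1 - \zeta_r^{k-j} - \zeta_r^{j-k} + 1) = \zeta_r^{j-k} + \zeta_r^{k-j} - 2$, which is $\alpha_{j-k} - 2 = \alpha_{j-k} - \alpha_0$, not $\alpha_k - \alpha_j$. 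So this shortcut proves the special case $k = 0$ directly from \eqref{beta}, and the general case then follows by applying a suitable element of $\Gal(K/\Q)$ — but one must be careful, since Galois does not send $\alpha_j - \alpha_0$ to $\alpha_j - \alpha_k$. I therefore expect the first approach (explicit factorization in $\Z[\zeta_r]$ plus valuation bookkeeping, then descent) to be the robust one; the only real subtlety, and the step I would write out most carefully, is confirming that \emph{both} exponents $j \pm k$ avoid $\equiv 0 \pmod r$ throughout the stated range so that each cyclotomic factor contributes valuation exactly one and there is no extra cancellation.
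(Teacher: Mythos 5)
Your proof is correct and takes essentially the same route as the paper: factor $\alpha_k-\alpha_j$ in $\Z[\zeta_r]$ as a unit times $(1-\zeta_r^{j-k})(1-\zeta_r^{j+k})$, note that each factor has $(1-\zeta_r)$-adic valuation exactly $1$ because $j\pm k\not\equiv 0\pmod r$ in the stated range, and descend to $K$ using $e\bigl((1-\zeta_r)/\beta\bigr)=2$. The paper compresses this into one line by citing the description of $\beta$ in \eqref{beta}; your version merely makes the valuation bookkeeping and the range check on $j\pm k$ explicit.
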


\begin{proof}
	This follows from the fact that $$\alpha_k-\alpha_j=(\zeta_r^k+\zeta_r^{-k})-(\zeta_r^j+\zeta_r^{-j})=\zeta_r^{-j}(1-\zeta_r^{j-k})(\zeta_r^{k+j}-1)$$ and  the description of $\beta$ given in the equation \eqref{beta}.
\end{proof}

\begin{lemma}
    
 \label{prop:relprime}For $0 \leq j \leq (r-1)/2$, the factors $f_j$s are pairwise coprime outside $\beta$. 
\end{lemma}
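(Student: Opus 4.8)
\emph{Proof proposal.} The natural approach is a ``shared prime divisor'' argument, with Lemma~\ref{lem:alpha} as the essential input. Suppose for contradiction that for some $0\le i<j\le (r-1)/2$ there is a prime ideal $\mathfrak{q}\neq\beta$ of $\mathcal{O}_K$ dividing both $f_i$ and $f_j$. The first thing I would record is that $\alpha_i-\alpha_j$ is divisible in $\mathcal{O}_K$ by no prime other than $\beta$: starting from the factorization $\alpha_i-\alpha_j=\zeta_r^{-j}(1-\zeta_r^{j-i})(\zeta_r^{i+j}-1)$ used in the proof of Lemma~\ref{lem:alpha}, and using that for $r\nmid m$ the element $1-\zeta_r^m$ is a unit times $1-\zeta_r$ in $\Z[\zeta_r]$ (an element whose norm is $\pm r$), the only rational prime below a prime divisor of $\alpha_i-\alpha_j$ is $r$; since $\beta$ is the unique prime of $\mathcal{O}_K$ over $r$ by \eqref{beta} and $v_\beta(\alpha_i-\alpha_j)=1$, in fact $(\alpha_i-\alpha_j)\mathcal{O}_K=\beta$, so in particular $\mathfrak{q}\nmid(\alpha_i-\alpha_j)$.

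Next I would exploit the two $\mathcal{O}_K$-linear relations among the $f$'s, namely
\[
f_i-f_j=(\alpha_i-\alpha_j)\,xy,\qquad \alpha_j f_i-\alpha_i f_j=(\alpha_j-\alpha_i)(x^2+y^2).
\]
Since $\mathfrak{q}$ divides both left-hand sides but not $\alpha_i-\alpha_j$, we get $\mathfrak{q}\mid xy$ and $\mathfrak{q}\mid(x^2+y^2)$. As $\mathfrak{q}$ is prime it divides $x$ or $y$, and feeding this back into $\mathfrak{q}\mid x^2+y^2$ shows $\mathfrak{q}$ divides \emph{both} $x$ and $y$. Letting $q$ be the rational prime lying below $\mathfrak{q}$, we conclude $q\mid x$ and $q\mid y$; by primitivity of $(x,y,z)$ we have $q\nmid z$, so comparing $q$-adic valuations in $x^r+y^r=Dz^p$ forces $q^r\mid D$, which is excluded (after the standard reduction to $\gcd(x,y)=1$, under which $\gcd(D,r)=1$ is preserved). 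This contradiction proves the statement; note the argument covers $i=0$ as well, since $f_0=(x+y)^2$ corresponds to $\alpha_0=2$ and Lemma~\ref{lem:alpha} applies to $\alpha_0-\alpha_j$.

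I do not expect a genuine obstacle here: the content is entirely local at $\beta$. The one point that needs care is the very first step — checking that Lemma~\ref{lem:alpha} together with the cyclotomic-unit observation really pins $(\alpha_i-\alpha_j)\mathcal{O}_K$ down to $\beta$ itself, so that a prime $\mathfrak{q}\neq\beta$ is genuinely forced into $xy$ and $x^2+y^2$ — and, secondarily, being precise in the last step about which primitivity/coprimality normalization of the solution is in force.
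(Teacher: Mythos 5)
Your proof is correct and follows essentially the same route as the paper: a common prime divisor $\mathfrak{q}\neq\beta$ of $f_i$ and $f_j$ must divide $(\alpha_i-\alpha_j)xy$, it cannot divide $x$ or $y$ (else, using $\mathfrak{q}\mid f_i$, it divides both, contradicting coprimality), and $\alpha_i-\alpha_j$ is supported only at $\beta$; you merely run the cases in the opposite order and add the auxiliary relation $\alpha_j f_i-\alpha_i f_j=(\alpha_j-\alpha_i)(x^2+y^2)$. Your endgame via $q^r\mid D$ is unnecessary (and not actually excluded by the paper's hypotheses on $D$, which only forbid $p$-th powers): once $\mathfrak{q}$ divides both $x$ and $y$ you already contradict $\gcd(x,y)=1$, which is exactly the assumption the paper invokes.
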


\begin{proof}Suppose $\pi$ is a prime with $\pi \mid  f_j, f_k$ and $\pi \neq \beta$. Then 
$$\pi \mid (f_j-f_k)= (\alpha_k-\alpha_j)xy=[(\zeta_r^k+\zeta_r^{-k})-(\zeta_r^j+\zeta_r^{-j})]xy=\zeta_r^k(1-\zeta_r^{-k+j})(1-\zeta_r^{-k-j})xy.$$ If $\pi \mid  x$ then since $\pi \mid  f_j$ we get $\pi\mid  y$, which contradicts the assumption that $x$ and $y$ are coprime. Similarly, we get a contradiction if $\pi \mid  y$. Therefore, we can say that $\pi \nmid xy$ hence $\pi \mid  \zeta_r^k(1-\zeta_r^{-k+j})(1-\zeta_r^{-k-j})$ but this is only divisible by $\beta$ and $\pi \neq \beta$, giving a contradiction once again.
\end{proof}

\begin{lemma}\label{lem:b2} 
The valuation $v_{\beta}(f_j)$ is either $0$ or $1$ and $v_{\beta}(f_i)=v_{\beta}(f_j)$ for all $1 \leq i,j \leq \frac{r-1}{2}$.
	\end{lemma}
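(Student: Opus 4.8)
The plan is to rewrite each $f_j$ so that the two potential sources of $\beta$-divisibility, namely $x+y$ and $\alpha_j-2$, are separated, and then to split into two cases according to whether or not $r\mid x+y$. Concretely, since $x^2+y^2+\alpha_j xy=(x+y)^2+(\alpha_j-2)xy$ and $\alpha_0=2$, Lemma~\ref{lem:alpha} applied with $k=0$ gives $v_\beta(\alpha_j-2)=1$ for every $1\le j\le\frac{r-1}{2}$. Thus $f_j=(x+y)^2+(\alpha_j-2)xy$ is a sum of two terms whose $\beta$-valuations I can compute: the second equals $1+v_\beta(xy)$, and the first equals $2v_\beta(x+y)$, which in turn equals $(r-1)\,v_r(x+y)$ because $r$ is totally ramified in $K$ with $r\mathcal{O}_K=\beta^{(r-1)/2}$.

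First I would treat the case $r\nmid x+y$. Then $v_\beta\big((x+y)^2\big)=0$ while $v_\beta\big((\alpha_j-2)xy\big)\ge 1$, so $v_\beta(f_j)=0$ for every $j$, and the lemma holds with common value $0$. In the remaining case $r\mid x+y$, primitivity of the solution forces $r\nmid x$ and $r\nmid y$ (otherwise $r$ would divide both $x$ and $y$), hence $v_\beta(xy)=0$ and $v_\beta\big((\alpha_j-2)xy\big)=1$; on the other hand $v_\beta\big((x+y)^2\big)=(r-1)\,v_r(x+y)\ge r-1>1$. Since the two summands then have distinct $\beta$-valuations, $v_\beta(f_j)=1$ for every $j$. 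In both cases $v_\beta(f_j)\in\{0,1\}$, with the value independent of $j$ in the range $1\le j\le\frac{r-1}{2}$. (Note that $f_0=(x+y)^2$ is correctly excluded from the equality, since in the second case its valuation $(r-1)v_r(x+y)$ can be much larger than $1$.)

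The only delicate step is the case $r\mid x+y$: it is precisely there that one uses both the total ramification of $r$ in $K$, which lifts $v_\beta\big((x+y)^2\big)$ to at least $r-1\ge 6$ and hence strictly above $1$, and the primitivity hypothesis, which excludes $r\mid xy$. Everything else is routine manipulation of valuations built on the already established Lemma~\ref{lem:alpha}, so I do not expect any further obstacle.
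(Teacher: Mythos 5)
Your proof is correct, but it follows a genuinely different route from the paper's. The paper proves the bound by contradiction via Galois conjugation: if $\beta^2\mid f_j$ then, since $\beta$ is the unique (hence Galois-stable) prime above $r$, also $\beta^2\mid f_j^\sigma$ for every $\sigma\in\Gal(K/\Q)$, so $\beta^2\mid f_j-f_j^\sigma=(\alpha_j-\alpha_j^\sigma)xy$; as $v_\beta(\alpha_j-\alpha_j^\sigma)=1$ this forces $\beta\mid xy$, and then $\beta\mid f_j$ gives $\beta\mid x$ and $\beta\mid y$, contradicting coprimality. The equality $v_\beta(f_i)=v_\beta(f_j)$ is then read off from $\beta\mid f_i-f_j$. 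You instead decompose $f_j=(x+y)^2+(\alpha_j-2)xy$, note $v_\beta(\alpha_j-2)=1$ (Lemma \ref{lem:alpha} with $k=0$), and compute the valuation of each summand directly, splitting on whether $r\mid x+y$; in each case the two summands have distinct $\beta$-valuations, so the ultrametric inequality is an equality and yields $v_\beta(f_j)=0$ or $1$ uniformly in $j$. Both arguments rest on the same two inputs --- primitivity of $(x,y)$ and the computation $v_\beta(\alpha_i-\alpha_j)=1$ --- but yours is more explicit: it pins down the common value, namely $v_\beta(f_j)=1$ exactly when $r\mid x+y$, which is the fact the paper only extracts later in the proof of Lemma \ref{lem:vals}. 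The paper's conjugation trick is shorter and avoids both the case split and the appeal to total ramification of $r$, but gives only the dichotomy. Your parenthetical observation that $f_0$ must be excluded from the equality statement is also accurate.
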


\begin{proof} Assume that $\beta^2$ divides $f_j$ for some $1\leq j \leq (r-1)/2.$ Since $\beta$ is the unique prime above $r$ we have $\beta^2$ divides $f_j^\sigma$ for every $\sigma \in \Gal(K/\Q).$ This gives $\beta^2 \mid  f_j-f_j^\sigma=(\alpha_j-\alpha_j^\sigma) xy.$ Notice that $\alpha_j^\sigma = \zeta^i +\zeta^{-i}$ where $i \nequiv \pm j \pmod r.$ Therefore, $$\alpha_j-\alpha_j^\sigma=(\zeta_r^j+\zeta_r^{-j})-(\zeta_r^i-\zeta_r^{-i})=\zeta_r^j(1-\zeta_r^{-j+i})(1-\zeta_r^{-j-i}).$$ One computes that $v_{\beta}(\alpha_j-\alpha_j^\sigma)=1$, and thus we must have $\beta \mid  xy$. Combining this with $\beta \mid f_j=x^2+y^2+\alpha_jxy$ we get that $\beta \mid  x $ and $\beta \mid  y$ contradicting the fact that $x$ and $y$ are coprime. Hence, $v_\beta(f_j)<2$. Moreover, since $f_j$ is an algebraic integer, we get that $v_\beta(f_j)\in \{0,1\}$. Suppose $\beta \mid   f_j$ and note that $$\beta \mid  (f_j-f_i)=\zeta_r^k(1-\zeta_r^{-k+j})(1-\zeta_r^{-k-j})xy,$$ for any $i\neq j$. Thus, it should divide $f_i$ for all $i$ and $v_{\beta}(f_i)=v_{\beta}(f_j)=1$.

	\end{proof}

\begin{cor}\label{cor:fjxy}
	For $1\leq j \leq (r-1)/2$, the terms $f_js$ factorize as $$f_j \mathcal O_K= (x^2+y^2+\alpha_jxy)\mathcal O_K= \mathcal I_j^p \mathcal D_j \beta^e, \qquad (x+y)\mathcal O_K = \mathcal I_0^p \mathcal D_0 (r)^{e_0},$$ where $\prod_{k=0}^{(r-1)/2}\mathcal D_k= (D) \mathcal O_K$, $e\in \{0,1\}$, $e_0=pv_r(z)-e$ with $\mathcal I_j$ pairwise coprime ideals which are prime-to-$\beta$.
\end{cor}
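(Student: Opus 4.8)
The plan is to combine the three preceding lemmas with the equation $Dz^p = (x+y)\prod_{1\le j < (r-1)/2} f_j$ and unique factorization of ideals in $\mathcal O_K$. First I would decompose the ideal $(f_j)\mathcal O_K$ according to primes: separate off the contribution at $\beta$, then group the remaining prime factors into those whose exponent is divisible by $p$ and a ``bounded'' part. Concretely, for a prime $\pi \ne \beta$ dividing $(f_j)$, the product formula over all $j$ (including $f_0=(x+y)^2$ and reading off $Dz^p$) shows that $\sum_{k} v_\pi(f_k) = v_\pi(D) + p\, v_\pi(z)$; by Lemma \ref{prop:relprime} the $f_k$ are pairwise coprime away from $\beta$, so for each such $\pi$ at most one $k$ has $v_\pi(f_k)>0$, and for that $k$ we get $v_\pi(f_k) = v_\pi(D_k) + p\, v_\pi(z)$ where $\mathcal D_k$ is defined to collect exactly the $D$-part. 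This lets me write $(f_j)\mathcal O_K = \mathcal I_j^p \mathcal D_j \beta^e$ with $\mathcal I_j$ supported away from $\beta$ and from $\supp(D)$, hence pairwise coprime across different $j$ (again by Lemma \ref{prop:relprime}), and with $\prod_{k=0}^{(r-1)/2}\mathcal D_k = (D)\mathcal O_K$.

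Next I would handle the exponent $e$ at $\beta$. By Lemma \ref{lem:b2}, $v_\beta(f_j)\in\{0,1\}$ and this common value $e$ is the same for all $1\le j\le (r-1)/2$, so $\beta^e$ (rather than a higher power) is indeed the full $\beta$-contribution to each $f_j$, and $e$ is independent of $j$. For the $(x+y)$ factor I would argue separately: since $r\mathcal O_K = \beta^{(r-1)/2}$ and $f_0 = (x+y)^2$, the $\beta$-valuation of $(x+y)$ is governed by $v_r(z)$ and by the $e$ appearing in the other factors. Writing $(x+y)\mathcal O_K = \mathcal I_0^p \mathcal D_0 (r)^{e_0}$, I compare $\beta$-valuations: $v_\beta$ of the left side equals $p\,v_\beta(\mathcal I_0) + v_\beta(\mathcal D_0) + \tfrac{r-1}{2} e_0$, while the total $\beta$-valuation of $Dz^p$ is $p\,v_r(z)\cdot\tfrac{r-1}{2}$ (as $\gcd(D,r)=1$ forces $v_\beta(D)=0$), and the $f_j$ for $1\le j<(r-1)/2$ contribute $e$ apiece. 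Dividing the resulting identity by $\tfrac{r-1}{2}$ and using $v_\beta(\mathcal D_0)=0$ gives $e_0 = p\,v_r(z) - e$, as claimed; the factor $\mathcal I_0$ is then forced to be prime-to-$\beta$ and the argument of the previous paragraph (applied to the $(x+y)$ factor) shows it is coprime to the other $\mathcal I_j$ and that $\mathcal D_0$ is the part of $(D)$ dividing $x+y$.

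Finally I would check that $\prod_{k=0}^{(r-1)/2}\mathcal D_k = (D)\mathcal O_K$ exactly, not merely up to a factor: every prime $\pi\nmid r$ dividing $D$ divides the product $(x+y)\prod f_j = Dz^p$, and since $(x,y,z)$ is primitive, $\pi\nmid z$, so $v_\pi(z)=0$ and the entire $\pi$-part of $Dz^p$ sits inside exactly one factor (by coprimality), contributing precisely $v_\pi(D)$ to the corresponding $\mathcal D_k$; summing over $\pi$ gives the claim. The main obstacle I anticipate is purely bookkeeping: making sure that ``the $D$-part'', ``the $p$-th power part'' and ``the $\beta$-part'' are genuinely disjoint in their supports, which rests on primitivity of $(x,y,z)$ together with $\gcd(D,r)=1$ and the pairwise coprimality outside $\beta$ from Lemma \ref{prop:relprime} — once those disjointness statements are pinned down, the factorizations and the formula $e_0 = p\,v_r(z)-e$ follow by comparing valuations prime by prime.
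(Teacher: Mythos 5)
Your proposal follows essentially the same route as the paper: pairwise coprimality of the $\mathcal I_j$ comes from Lemmas \ref{prop:relprime} and \ref{lem:b2}, and the relation $e_0=p\,v_r(z)-e$ is obtained exactly as in the text by comparing $\beta$-valuations on both sides of $Dz^p=(x+y)\prod f_j$ and dividing by $\tfrac{r-1}{2}$. One aside in your last paragraph is false --- primitivity of $(x,y,z)$ does not imply $\pi\nmid z$ for a prime $\pi\mid D$ --- but it is harmless, since $\mathcal D_k$ is defined to carry exactly the exponent $v_\pi(D)$ and any $\pi$-part of $z^p$ is absorbed into $\mathcal I_k^p$ (the corollary never claims $\mathcal I_k$ is coprime to $\mathcal D_k$).
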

\begin{proof}
	Coprimality of $\mathcal I_j$, with $j \neq 0$ follows from Lemmas \ref{prop:relprime} and \ref{lem:b2}. 
	The exponents $e,e_0$ are explained by the fact that $(r)\mathcal O_K = (\beta)^{\frac{r-1}{2}}$ and $\gcd(D,r)=1$, hence $$p\frac{r-1}{2}v_r(z)=pv_\beta(z)=v_\beta(Dz^p)=v_\beta(x^r+y^r)=v_\beta(x+y)+\sum_{j=1}^{(r-1)/2}v_\beta(f_j)$$
	$$=\frac{r-1}{2}v_r(x+y)+\frac{r-1}{2}e = \frac{r-1}{2}e_0+\frac{r-1}{2}e. $$
	
\end{proof}

In the remainder of this section, we introduce a Frey elliptic curve associated with a proposed solution to Equation \ref{maineqn} and establish several results about this curve. We also examine the special properties of this Frey curve when considering specific solutions, such as those where 
$ r \mid  z.$ These type of restrictions occur naturally in the study of Fermat-type equations and analogous to \emph{first case solutions} as called by Darmon in \cite{Darmon}. Studying this special case is essential, as applying Theorem \ref{thm:EC} -- a critical step in proving the second part of the first main theorem -- requires detailed information about the images of inertia at 
$\beta$, specifically $\overline{\rho}_{E,p}(I_\beta)$.  
For instance, when $r \equiv 1 \mod 4$, the application of Theorem \ref{thm:EC} necessitates that the Frey curve exhibits potentially multiplicative reduction, a condition satisfied only under the additional assumption that 
$r \mid  z$.

%In the rest of this section, we introduce a Frey elliptic curve attached to a proposed solution of Equation \ref{maineqn} and prove some results about this curve. We will also consider special properties that this Frey curve has when we start with a special solution such as $r\mid  z.$  We need to study the special solution case too since in order to apply Theorem \ref{thm:EC}, which is a crucial step while proving the second part of the first main theorem, we need information regarding the images of inertia at $\beta$, $\overline{\rho}_{E,p}(I_\beta)$ . For instance, to apply Theorem \ref{thm:EC} when $r \equiv 1 \mod 4$, we need the Frey curve to have potentially multiplicative reduction. This is achieved only if we have the additional assumption $r \mid  z$. \ekin{recall Darmons first case solution}

%In the rest of this section, we introduce two different Frey elliptic curves and prove some results about these curves.  We need two Frey elliptic curves since in order to apply Theorem \ref{thm:EC}, which is a crucial step while proving the second part of the first main theorem, we need information regarding the images of inertia at $\beta$, $\overline{\rho}_{E,p}(I_\beta)$ . For instance, to apply Theorem \ref{thm:EC} when $r \equiv 1 \mod 4$, we need the elliptic curve $E$ to have potentially multiplicative reduction. However, even if we have the extra assumption $r \mid  z$, we get $v_\beta(j_{E_1})=0$.

\subsection{Frey Curve}\label{curve2}
The first step in the modular method is to attach a Frey curve $E$ to $(x,y,z)$, where $(x,y,z)$  is a primitive non-trivial integer solution of $x^r+y^r=Dz^p$. 

We aim to mimic the Frey elliptic curve construction from the original proof of Fermat's Last Theorem. We follow \cite{F} who constructs an elliptic curve
\[
E: Y^2 = X(X-A)(X+B)
\]
defined over the totally real field $K=\Q(\zeta_r+\zeta_r^{-1})$. In this setting, the second step of the modular method is known to work by work of Freitas and Siksek \cite{FSasymp}.

Now, let us define $A,B$ and $C$.
%We will first state the general properties of this curve and then study the special properties that $E$ has when $r\mid  z$. 
Firstly, we consider the following equation between the terms $f_0:=(x+y)^2, f_1$ and $f_2$:
\begin{equation}\label{alphas}   
(\alpha_1-\alpha_2)(x+y)^2 + (\alpha_2-2)f_1+ (2-\alpha_1)f_2 = 0	.
\end{equation}
Here we know $f_1 \mathcal O_K$ and $f_2\mathcal O_K$ have special shapes as described in Corollary \ref{cor:fjxy} and $(\alpha_1-\alpha_2)$, $(\alpha_2-2)$, $(2-\alpha_1)$ are $S$-units where $S=\{\beta\}$. %In fact $v_\beta(\alpha_1-\alpha_2)=1$. 
We define $A,B,C$ as follows:
 \begin{equation}\label{eqn:ABC}A:=(\alpha_1-\alpha_2)f_0,\quad B:=(\alpha_2-2)f_1,\quad C:= (2-\alpha_1)f_2.\end{equation}
In particular, we note that $E$ depends on the putative solution $(x,y,z)$, as the $f_i$s do.
\begin{lemma}\label{lem:unit}
The valuations of the terms appearing in \eqref{alphas} are given as follows: $$v_\beta(\alpha_1-\alpha_2)=v_\beta(\alpha_2-2)=v_\beta(2-\alpha_1)=1.$$
\end{lemma}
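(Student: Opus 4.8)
The claim is that $v_\beta(\alpha_2 - 2) = v_\beta(2 - \alpha_1) = 1$, where $\alpha_j = \zeta_r^j + \zeta_r^{-j}$ and $\beta$ is the unique prime of $\mathcal{O}_K$ above $r$. The natural approach is to recognize that $2 = \alpha_0 = \zeta_r^0 + \zeta_r^{0}$, so $\alpha_2 - 2 = \alpha_2 - \alpha_0$ and $2 - \alpha_1 = \alpha_0 - \alpha_1$ (up to sign). Thus both quantities are differences of the form $\alpha_k - \alpha_j$ with $0 \le k < j \le (r-1)/2$, and Lemma \ref{lem:alpha} applies directly.

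Concretely, I would first note that $\alpha_0 = \zeta_r^0 + \zeta_r^{-0} = 2$, so that $2 - \alpha_1 = \alpha_0 - \alpha_1$ and $\alpha_2 - 2 = \alpha_2 - \alpha_0 = -(\alpha_0 - \alpha_2)$. Both $\alpha_0 - \alpha_1$ and $\alpha_0 - \alpha_2$ are instances of $\alpha_k - \alpha_j$ with indices $0 \le k < j \le (r-1)/2$ (namely $(k,j) = (0,1)$ and $(k,j) = (0,2)$, which are in range since $r > 5$ forces $(r-1)/2 \ge 3$). Lemma \ref{lem:alpha} then gives $v_\beta(\alpha_0 - \alpha_1) = v_\beta(\alpha_0 - \alpha_2) = 1$, and since $v_\beta$ is unchanged by multiplication by $-1$ (a unit), we conclude $v_\beta(\alpha_2 - 2) = v_\beta(2 - \alpha_1) = 1$, as required.

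Alternatively, one could argue directly from the factorization in Lemma \ref{lem:alpha}: $\alpha_0 - \alpha_j = 2 - (\zeta_r^j + \zeta_r^{-j}) = -\zeta_r^{-j}(1 - \zeta_r^j)(\zeta_r^j - 1) \cdot(-1) = \zeta_r^{-j}(1-\zeta_r^j)^2$, wait — more carefully, $2 - \zeta_r^j - \zeta_r^{-j} = -\zeta_r^{-j}(\zeta_r^{2j} - 2\zeta_r^j + 1) = -\zeta_r^{-j}(\zeta_r^j - 1)^2$. Since $\beta = (1-\zeta_r^i)(1-\zeta_r^{-i})\mathcal{O}_K$ has the property that $v_\beta(1 - \zeta_r^j) = \tfrac{1}{2}$ in the sense that $(1-\zeta_r^j)(1-\zeta_r^{-j})$ generates $\beta$, one sees $(\zeta_r^j-1)^2$ up to units generates $\beta$, giving valuation $1$. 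Either route is short; I would present the first, as it simply cites Lemma \ref{lem:alpha}.

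There is essentially no obstacle here: the only thing to check is the bookkeeping that the index $2$ does not exceed $(r-1)/2$, which is guaranteed by the standing hypothesis $r > 5$. The lemma is a direct corollary of Lemma \ref{lem:alpha} once one observes $2 = \alpha_0$, and the proof is a one-line citation.
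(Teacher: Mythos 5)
Your proof is correct and takes essentially the same approach as the paper: the paper's one-line proof notes $\alpha_j-2=-(1-\zeta_r^j)(1-\zeta_r^{-j})$ and invokes the definition of $\beta$, which is exactly your second (alternative) computation. Your primary route, observing $2=\alpha_0$ and citing Lemma~\ref{lem:alpha} with $(k,j)=(0,1),(0,2)$, is just a repackaging of that same factorization and is equally valid.
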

\begin{proof}
This follows immediately from the definitions of $\alpha_1,\alpha_2$, and $\beta$.
\end{proof}

%For the rest of this subsection, we assume that $r\mid  z$.

In the following lemma, we determine the valuations of $A,B$ and $C$ under the condition $r\mid  z$. These will be used to show that $E$ has potentially multiplicative reduction at the prime $\beta$ above $r$, which will be used in the Eichler--Shimura step, as detailed in Remark \ref{rmk:r}.

\begin{lemma}\label{lem:vals}  Let $D$ be an integer with $\gcd(D,r)=1$ and suppose that $v_r(z)=k>0$. Then the following statements hold
	\begin{itemize}
		\item $v_\beta(B)=v_\beta(C)=2,$
		\item $k_a:=v_\beta(A)=(pk-1)(r-1)+1$.
		\end{itemize}
	\end{lemma}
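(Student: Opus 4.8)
The plan is to compute the $\beta$-adic valuations of $A$, $B$, $C$ directly from their definitions, using the factorizations in Corollary \ref{cor:fjxy} and the valuation computations in Lemmas \ref{lem:alpha}, \ref{lem:b2} and \ref{lem:unit}. The key point is that since $v_r(z) = k > 0$, equation \eqref{alphas} forces all three of $f_0, f_1, f_2$ to be divisible by $\beta$, so in Corollary \ref{cor:fjxy} we are in the case $e = 1$ (equivalently $e_0 = pk - 1 \geq 1$).

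First I would handle $B$ and $C$. By definition $B = (\alpha_2 - 2)f_1$, so $v_\beta(B) = v_\beta(\alpha_2 - 2) + v_\beta(f_1)$. By Lemma \ref{lem:unit} we have $v_\beta(\alpha_2 - 2) = 1$. For $v_\beta(f_1)$: since $r \mid z$ we have $v_\beta(x^r + y^r) = p v_\beta(z) = pk\frac{r-1}{2} > 0$, and because $f_0 = (x+y)^2$ and the $f_j$ all have equal $\beta$-valuation by Lemma \ref{lem:b2} (which is $0$ or $1$), the only way the total $\beta$-valuation of the product $(x+y)\prod f_j$ can be positive and a multiple of $\frac{r-1}{2}$ with each $v_\beta(f_j) \in \{0,1\}$ is $v_\beta(f_j) = 1$ for all $j \geq 1$ (this is exactly $e = 1$ in Corollary \ref{cor:fjxy}). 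Hence $v_\beta(f_1) = 1$ and $v_\beta(B) = 2$; the same computation with Lemma \ref{lem:unit} giving $v_\beta(2 - \alpha_1) = 1$ yields $v_\beta(C) = 2$.

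Next I would compute $v_\beta(A)$ where $A = (\alpha_1 - \alpha_2)(x+y)^2$. By Lemma \ref{lem:alpha}, $v_\beta(\alpha_1 - \alpha_2) = 1$. For $v_\beta((x+y)^2) = 2 v_\beta(x+y)$: from the valuation identity in the proof of Corollary \ref{cor:fjxy} we have $v_\beta(x+y) = v_r(x+y) \cdot \frac{r-1}{2} = e_0 = pk - e = pk - 1$ (using $e = 1$ as established above). Actually, more carefully, $v_\beta(x+y) = \frac{r-1}{2} v_r(x+y)$ and $\frac{r-1}{2} v_r(x+y) = \frac{r-1}{2} e_0$, so $v_\beta(x+y) = \frac{r-1}{2}(pk-1)$. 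Therefore
\[
k_a = v_\beta(A) = 1 + 2 \cdot \frac{r-1}{2}(pk - 1) = (pk - 1)(r - 1) + 1,
\]
as claimed.

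\textbf{Main obstacle.} The only real subtlety is justifying that $e = 1$ rather than $e = 0$, i.e.\ that $\beta \mid f_1$. This follows because $r \mid z$ forces $v_\beta(x^r + y^r) > 0$; since $\gcd(x,y) = 1$ we cannot have $\beta \mid x$ and $\beta \mid y$, and one checks (as in the proofs of Lemmas \ref{prop:relprime} and \ref{lem:b2}) that $\beta \mid (x+y)$ combined with the structure of the $f_j$ forces $\beta$ to divide every $f_j$ — alternatively, one reads this off equation \eqref{alphas} together with $v_\beta(\alpha_1 - \alpha_2) = v_\beta(\alpha_2 - 2) = v_\beta(2 - \alpha_1) = 1$ and $v_\beta(x+y) > 0$, which forces $v_\beta(f_1) = v_\beta(f_2) = 1$. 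Once $e = 1$ is pinned down, the rest is the bookkeeping above.
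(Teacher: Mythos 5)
Your proof follows essentially the paper's route and arrives at the correct valuations: Lemma \ref{lem:unit} together with $v_\beta(f_1)=v_\beta(f_2)=1$ gives $v_\beta(B)=v_\beta(C)=2$, and $v_\beta(\alpha_1-\alpha_2)=1$ together with $v_\beta(x+y)=(pk-1)\tfrac{r-1}{2}$ gives $k_a=(pk-1)(r-1)+1$, exactly as in the paper. The one weak point is your justification that $e=1$: the counting argument in your second paragraph (``the only way the total $\beta$-valuation can be positive and a multiple of $\tfrac{r-1}{2}$ \dots is $v_\beta(f_j)=1$ for all $j$'') is a non sequitur, because the alternative $v_\beta(f_j)=0$ for every $j$ with $v_\beta(x+y)=pk\tfrac{r-1}{2}$ satisfies the same numerical constraints. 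What actually rules this out --- and what your ``main obstacle'' paragraph gestures at but leaves as ``one checks'' --- is the congruence $f_j\equiv f_0=(x+y)^2 \pmod{\beta}$, which follows from $f_j-f_0=(\alpha_j-2)xy$ and Lemma \ref{lem:unit}; since the residue field of $\beta$ is $\mathbb{F}_r$ one also has $x^r+y^r\equiv x+y\pmod{\beta}$, so $r\mid z$ forces $\beta\mid x+y$ and hence $\beta\mid f_j$ for every $j$, whence $v_\beta(f_j)=1$ by Lemma \ref{lem:b2}. With that one line supplied your argument is complete (and in fact more explicit than the paper, which asserts $v_\beta(f_j)=1$ when $r\mid z$ without proof). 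Your second ``alternative'' via equation \eqref{alphas} should be dropped or tightened: as stated it does not exclude the possibility that the two terms $(\alpha_2-2)f_1$ and $(2-\alpha_1)f_2$, each of valuation $1$, cancel to high order, so it does not by itself force $v_\beta(f_1)=v_\beta(f_2)=1$.
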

\begin{proof}
By Lemma \ref{lem:b2}, $v_\beta(f_j)=0$ or $1$. However when $r\mid  z$ we have $v_\beta(f_j)=1$ for $1 \leq j \leq f_{\frac{r-1}{2}}$. By Lemma \ref{lem:unit}, $v_\beta(B)=v_\beta(C)=2.$ For the second part, we have that $r^{pk} \mid  \mid  Dz^p.$ Since each factor $f_j$ is divisible by $\beta$ exactly once and  $(x+y) f_1\ldots f_{\frac{r-1}{2}}=Dz^p$ we see that $r^{pk-1} \mid  \mid  (x+y)$, which gives $v_\beta(x+y)=(pk-1)\frac{r-1}{2}$.  By Lemma \ref{lem:alpha} we have $v_\beta(\alpha_1-\alpha_2)=1$. This shows that $v_\beta(A)= (pk-1)(r-1)+1$.
\end{proof}

Hence, the terms $A,B$ and $C$ are coprime away from $\beta$ and have the following special factorizations as ideals of $\mathcal O_K:$
$$A\mathcal O_K={\mathcal{D}_0}(\mathcal{I}_0)^p \beta^{k_a}, \quad B\mathcal O_K={\mathcal{D}_1}(\mathcal{I}_1)^p \beta^{2}, \quad C\mathcal O_K={\mathcal{D}_2}(\mathcal{I}_2)^p \beta^{2},$$
where all ideals are as in Corollary \ref{cor:fjxy} and $k_a$ as in Lemma \ref{lem:vals}.

\subsubsection{\textbf{Arithmetic Invariants and behaviour at bad primes}}

We recall here well-known facts about the arithemetic invariants of $E$.
\begin{lemma}  \label{lem:freyinv} Let $E$ be the Frey elliptic curve given by $$E: Y^2=X(X-A)(X+B),$$ where $A,B,C$ are as in Equation \ref{eqn:ABC}. In particular, $A+B+C=0.$ Then it has the following arithmetic invariants:
	
	\begin{itemize}
		\item $c_4=16(A^2+AB+B^2)=16(A^2-CB)=16(B^2-CA)=16(C^2-AB)$,
		\item $c_6=-32(A-B)(B-C)(C-A)$,
		\item The discriminant of $E$ is $\Delta_{E}=16(ABC)^2$,
		\item The $j$-invariant of $E$ is $j_{E}=\frac{c_4^3}{\Delta_{E}}=2^8\frac{(A^2+AB+B^2)^3}{A^2B^2C^2}$.
		
	\end{itemize}
\end{lemma}
\begin{proof}
This follows immediately from the definitions of the arithmetic invariants of elliptic curves and the fact that $A+B+C=0$.
\end{proof}

\begin{defn}\label{NPMP}
	Let $E/K$ an elliptic curve of conductor $\mathcal{N}_E$ and $p$ be a rational prime. For a prime ideal $\frakq$ of $K$ denote
	by $\Delta_\frakq$ the discriminant of a local minimal model for $E$ at $\frakq$. Let
	
	\begin{equation}\label{condNM}
		\mathcal{M}_p:=\prod_{\substack{\mathfrak{q}\mid  \mid  \mathcal{N}_E \\ p\mid  v_{\mathfrak{q}}(\Delta_\mathfrak{q})}}\mathfrak{q}, \qquad \mathcal{N}_p:=\frac{\mathcal{N}_E}{\mathcal{M}_p}.
	\end{equation}
\end{defn}

\begin{prop} \label{prop:semi2}For the Frey curve $E$, the quantities $\mathcal N_{E}$ and $\mathcal N_p$ are as below:
	
	$$\mathcal N_{E}=\beta^{f_{\beta}}\prod\limits_{\mathfrak{P}\mid  2} \mathfrak{P}^{e_\mathfrak{P}} \prod_{\substack{ \fp \mid  ABC  \\ \mathfrak{p}\nmid 2, \mathfrak{p}\neq \beta}}\mathfrak{p} , \quad \mathcal N_p={\mathcal{D}}\beta^{f'_{\beta}}\prod\limits_{\mathfrak{P}\mid  2} \mathfrak{P}^{e'_{\mathfrak{P}}}, $$ where ${\mathcal{D}}:=\prod\limits_{\substack{ \fp \mid  (\mathcal{D}_0\mathcal{D}_1\mathcal{D}_2)   \\ \mathfrak{p}\nmid 2}}\mathfrak{p},$  
    and
    $0\leq e_{\mathfrak{P}}'\leq e_{\mathfrak{P}}\leq 2+6v_{\mathfrak{P}}(2)$ and $0\leq f_{\beta}'\leq f_{\beta} \leq 2$.
\end{prop}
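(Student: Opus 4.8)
The plan is to run the proof of Proposition~\ref{prop:semi1} essentially verbatim; the only inputs that change are the ideal factorizations of $A$, $B$, $C$, which are now the ones displayed just before the statement (coming from Lemmas~\ref{lem:unit} and~\ref{lem:vals}) rather than those in~\eqref{ABC1}. Since $E_2$ has exactly the same $c_4$, $c_6$, $\Delta$ and $j$ as $E_1$ by Lemma~\ref{lem:freyinv}, and since $A$, $B$, $C$ are again pairwise coprime away from $\beta$, the local analysis at every prime other than $\beta$ and the primes above $2$ is formally identical, so I would only need to point to the places where the argument must be re-checked.

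First I would dispose of the primes $\mathfrak p\in\{\beta\}\cup\{\mathfrak P\mid 2\}$ exactly as in Proposition~\ref{prop:semi1}: by \cite[Theorem~IV.10.4]{Sil94} one gets $0\le e_{\mathfrak P}\le 2+6v_{\mathfrak P}(2)$ for $\mathfrak P\mid 2$, and at $\beta$ one has $0\le f_\beta\le 2$ because $\beta\mid r$ with $r>5$, so the residue characteristic at $\beta$ exceeds $3$ and there is no wild contribution to the conductor exponent. The corresponding bounds $0\le e'_{\mathfrak P}\le e_{\mathfrak P}$ and $0\le f'_\beta\le f_\beta$ for the exponents in $\mathcal N_p$ then follow at once from $\mathcal N_p\mid \mathcal N_{E_2}$, which is built into Definition~\ref{NPMP}.

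Next, for $\mathfrak p\notin\{\beta\}\cup\{\mathfrak P\mid 2\}$ I would argue: if $\mathfrak p\nmid ABC$, then $v_{\mathfrak p}(\Delta_{E_2})=2v_{\mathfrak p}(ABC)=0$, so the given model is minimal with good reduction at $\mathfrak p$; if $\mathfrak p\mid ABC$, then by pairwise coprimality away from $\beta$ the prime $\mathfrak p$ divides exactly one of $A$, $B$, $C$, and using the identities $A^2+AB+B^2=B^2-CA=C^2-AB$ together with $\mathfrak p\nmid 2$ one sees $v_{\mathfrak p}(c_4)=0$, so the model is minimal with multiplicative reduction at $\mathfrak p$. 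Then, as in~\eqref{tra}, for such $\mathfrak p$ with moreover $\mathfrak p\nmid\mathcal D$ one has $v_{\mathfrak p}(\Delta_{E_2})=2v_{\mathfrak p}(ABC)=2p\,v_{\mathfrak p}(\mathcal I_0\mathcal I_1\mathcal I_2)$, which is divisible by $p$, hence $\mathfrak p\mid\mathcal M_p$ and $\mathfrak p\nmid\mathcal N_p$. The remaining odd bad primes are exactly those dividing $\mathcal D_0\mathcal D_1\mathcal D_2$, i.e.\ those dividing $\mathcal D$; since $\prod_k\mathcal D_k=(D)\mathcal O_K$ by Corollary~\ref{cor:fjxy} we get $\mathcal D\mid D$. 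Collecting these contributions according to the definitions in~\eqref{condNM} yields the asserted shapes of $\mathcal N_{E_2}$ and $\mathcal N_p$.

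I do not expect a genuine obstacle here: the one feature that differs from Proposition~\ref{prop:semi1} is that $v_\beta(A)=k_a$ is now large and unequal to $v_\beta(B)=v_\beta(C)=2$, so there is no uniform power $\beta^t$ dividing $A$, $B$, $C$; but this is immaterial because at $\beta$ we invoke only the coarse bound $f_\beta\le 2$ and never any minimality or reduction-type statement, and $\beta$ plays no role at any other prime. The only points requiring care are the verification that $v_{\mathfrak p}(c_4)=0$ at the bad primes $\mathfrak p\neq\beta$ (which uses both $\mathfrak p\nmid 2$, to kill the factor $16$, and the coprimality of $A,B,C$ away from $\beta$) and the bookkeeping that the surviving odd primes in $\mathcal N_p$ are precisely those dividing $\mathcal D\mid D$.
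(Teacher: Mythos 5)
Your proposal is correct and follows essentially the same route as the paper, which likewise reduces Proposition~\ref{prop:semi2} to the argument of Proposition~\ref{prop:semi1} using that $E_2$ has the same invariants, checks $v_{\mathfrak p}(c_4)=0$ at odd bad primes $\mathfrak p\neq\beta$ via coprimality of $A,B,C$ away from $\beta$, and gets $p\mid v_{\mathfrak p}(\Delta_{E_2})=2p\,v_{\mathfrak p}(\mathcal I_0\mathcal I_1\mathcal I_2)$ for the primes absorbed into $\mathcal M_p$. Your explicit remark that the unequal $\beta$-valuations of $A,B,C$ are immaterial because only the coarse bound $f_\beta\le 2$ is used at $\beta$ is a point the paper leaves implicit, but it does not change the argument.
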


\begin{proof}
For every $\frapk \nmid 2ABC$, the Frey curve $E$ has good reduction since $v_\frapk(\Delta_{E})=0$. For the primes $\frapk\nmid 2\beta$ and dividing $ABC$, $v_\frapk(\Delta_{E})>0$ and  $v_\frapk(c_4)=v_\frapk(A^2+AB+B^2)=0$ since $A,B,C$ are coprime away from $\beta$. Therefore the primes $\frapk\nmid 2\beta$ and $\frapk \mid  ABC$ have exponent one in the conductor of $E,$ hence we get the form for $\mathcal N_E$.% is semistable at these primes. %$\frapk\nmid 2\beta$ such that $\frapk\mid  ABC$.
  %This gives us the claimed form for $\mathcal N$. 
  
  To get $\mathcal N_p$ we need to show that for $\frapk \mid  ABC$ and $\frapk \nmid 2\beta$  we have $p\mid  v_\frapk(\Delta_{E})$. Note that for such $\frapk$, $v_\frapk(\Delta_{E})=2v_\frapk(ABC)=2pv_\frapk(\mathcal{I}_0\mathcal{I}_1\mathcal{I}_2)$. Hence the result follows.
\end{proof}

For the rest of this subsection, we assume that $r\mid  z$. In this case, we prove that the image of inertia at $\beta$ is large, or equivalently that $E$ has potentially multiplicative reduction at $\beta$.
\begin{proposition}\label{prop:valj}
	Let $D$ be relatively prime with $r$ and suppose that $v_r(z)=k$. Using the notation above, we get that $$ v_\beta(j_{E})=2-2(pk-1)(r-1).$$
In particular, by taking $p$ large enough, $E$ has potentially multiplicative reduction at $\beta$.	
	
\end{proposition}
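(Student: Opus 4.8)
The plan is to compute $v_\beta(j_{E_2})$ directly from the formula for the $j$-invariant given in Lemma~\ref{lem:freyinv}, namely
\[
j_{E_2}=2^8\frac{(A^2+AB+B^2)^3}{A^2B^2C^2},
\]
and then feed in the $\beta$-adic valuations of $A$, $B$, $C$ supplied by Lemma~\ref{lem:vals}. Since $\beta\nmid 2$, the factor $2^8$ contributes nothing to $v_\beta$, so
\[
v_\beta(j_{E_2})=3\,v_\beta\!\left(A^2+AB+B^2\right)-2v_\beta(A)-2v_\beta(B)-2v_\beta(C).
\]
The second part of Lemma~\ref{lem:vals} gives $v_\beta(A)=k_a=(pk-1)(r-1)+1$ and $v_\beta(B)=v_\beta(C)=2$, so $-2v_\beta(A)-2v_\beta(B)-2v_\beta(C)=-2(pk-1)(r-1)-2-8=-2(pk-1)(r-1)-10$.

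The one point that needs care is the valuation of $c_4/16 = A^2+AB+B^2$ at $\beta$. Here I would use the alternative expressions from Lemma~\ref{lem:freyinv}: $A^2+AB+B^2=B^2-CA=C^2-AB=A^2-CB$. Because $p$ is large (so $k_a$ is much larger than $2$), the valuations of the three monomials $A^2$, $AB$, $B^2$ are $2k_a$, $k_a+2$, $4$ respectively, which are pairwise distinct for $p$ large; hence $v_\beta(A^2+AB+B^2)=\min\{2k_a,\,k_a+2,\,4\}=4$ by the ultrametric inequality (with equality since the minimum is attained uniquely). Substituting $v_\beta(A^2+AB+B^2)=4$ gives
\[
v_\beta(j_{E_2}) = 12 - 2(pk-1)(r-1) - 10 = 2 - 2(pk-1)(r-1),
\]
which is the claimed formula.

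Finally, for the ``in particular'' clause: since $r>5$ and $k=v_r(z)\geq 1$ (we are in the case $r\mid z$), for $p$ large enough $(pk-1)(r-1)>1$, so $v_\beta(j_{E_2})=2-2(pk-1)(r-1)<0$. An elliptic curve whose $j$-invariant has negative valuation at a prime has potentially multiplicative reduction there, which gives the conclusion. The main (though still minor) obstacle is simply making the case distinction ``$p$ large'' explicit enough to guarantee that the three monomial valuations $2k_a$, $k_a+2$, $4$ are genuinely distinct, so that the ultrametric inequality is an equality; this amounts to checking $k_a=(pk-1)(r-1)+1>2$, i.e.\ $p$ large, which holds under the standing hypothesis.
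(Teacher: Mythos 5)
Your proof is correct and follows essentially the same route as the paper: plug the valuations $v_\beta(A)=(pk-1)(r-1)+1$, $v_\beta(B)=v_\beta(C)=2$ from Lemma~\ref{lem:vals} into $j_{E_2}=2^8(A^2+AB+B^2)^3/(A^2B^2C^2)$ and use that the minimum valuation among $A^2,AB,B^2$ is attained uniquely by $B^2$, giving $v_\beta(A^2+AB+B^2)=4$. The only cosmetic remark is that your ``$p$ large'' caveat for $k_a>2$ is unnecessary: since $r>5$, $k\geq 1$ and $p\geq 2$, one always has $k_a\geq r>2$, so the uniqueness of the minimum (and the negativity of $v_\beta(j_{E_2})$) holds for every admissible $p$.
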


\begin{proof} By Lemma \ref{lem:vals} we have $v_\beta(A)=(pk-1)(r-1)+1.$ Since $j_{E}=\frac{2^8(A^2+AB+B^2)^3}{A^2B^2C^2},$ and $v_\beta(B)=v_\beta(C)=2, v_\beta(A)>2$ by Lemma \ref{lem:vals}, we obtain $v_\beta(j_{E})=2-2(pk-1)(r-1).$

\end{proof}
Using Lemma 3.4 of \cite{FSasymp}, which we state for the convenience of
the reader below, we get the following result:
\begin{lemma}\cite[Lemma 3.4]{FSasymp}\label{inertia}
		Let $E$ be an elliptic curve over $K$ with $j$-invariant $j_E$. Let $p\geq 5$ and
		let $\mathfrak{q} \nmid p$ be a prime of $K$. Then $p \mid  \# \overline{\rho}_{E,p}(I_{\mathfrak{q}})$ if and only if $E$ has potentially
		multiplicative reduction at $\mathfrak{q}$ (i.e. $v_{\mathfrak{q}}(j_E)<0$) and $p \nmid v_{\mathfrak{q}}(j_E)$.
	\end{lemma}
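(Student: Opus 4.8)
The statement is Lemma 3.4 of \cite{FSasymp} quoted verbatim, so the goal is simply to recall why it is true; the argument rests on the theory of the Tate curve together with the semistable reduction theorem for elliptic curves. First I would separate the two directions. For the "if" direction, suppose $E$ has potentially multiplicative reduction at $\mathfrak q$, i.e.\ $v_{\mathfrak q}(j_E)<0$, and $p\nmid v_{\mathfrak q}(j_E)$. After a finite (at most quadratic) base change to a field $K_{\mathfrak q}'/K_{\mathfrak q}$, the curve $E$ acquires split multiplicative reduction and becomes isomorphic to a Tate curve $E_q$ with $v(q)=-v_{\mathfrak q}(j_E)>0$ (here $v$ is the valuation on $K_{\mathfrak q}'$, which is either $v_{\mathfrak q}$ or twice it). The $p$-torsion of the Tate curve sits in the Galois-stable exact sequence $0\to\mu_p\to E_q[p]\to\Z/p\to 0$, and the extension class is governed by $q^{1/p}$: the inertia action on $E_q[p]$ is nontrivial (indeed of order $p$) precisely when $q\notin (K_{\mathfrak q}'^{\times})^p\cdot(\text{units})$, i.e.\ when $p\nmid v(q)$. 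Since $p\nmid v_{\mathfrak q}(j_E)$ and the index $[K_{\mathfrak q}':K_{\mathfrak q}]\le 2$ is coprime to $p$ (as $p\ge5$), we have $p\nmid v(q)$ as well, so inertia acts through an element of order $p$ on $E_q[p]=E[p]$ over $K_{\mathfrak q}'$; a fortiori $p\mid\#\overline\rho_{E,p}(I_{\mathfrak q})$ over $K_{\mathfrak q}$.

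For the "only if" direction, I would argue by contraposition across the cases of the semistable reduction theorem. If $E$ has potentially good reduction at $\mathfrak q$, then after a finite base change $E$ has good reduction, so the inertia subgroup $I_{\mathfrak q}'$ of the base-changed field acts trivially on $E[p]$; hence $\overline\rho_{E,p}(I_{\mathfrak q})$ is a finite group which is a quotient of $I_{\mathfrak q}/I_{\mathfrak q}'$, and by the classical analysis (e.g.\ via the Néron–Ogg–Shafarevich criterion and the structure of the potential good reduction, cf.\ \cite{Sil94}) this image has order dividing $24$ when $\mathfrak q\nmid p$, in particular prime to $p$ since $p\ge5$. If instead $E$ has potentially multiplicative reduction but $p\mid v_{\mathfrak q}(j_E)$, then running the Tate curve computation above in reverse: $p\mid v(q)$, so $q$ is a $p$-th power times a unit in $K_{\mathfrak q}'^{\times}$, the extension $0\to\mu_p\to E_q[p]\to\Z/p\to 0$ splits after an unramified twist, and inertia acts on $E_q[p]$ through $\mu_p\hookrightarrow E_q[p]$; but $I_{\mathfrak q}$ acts on $\mu_p$ trivially since $\mathfrak q\nmid p$, so $\overline\rho_{E,p}(I_{\mathfrak q})$ has order prime to $p$ (in fact it is unipotent of order dividing the ramification of the quadratic twist, hence $1$ or $2$). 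In all three cases the image has order prime to $p$, which is the contrapositive of the desired implication.

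The one genuinely delicate point — and the step I expect to be the main obstacle to write cleanly — is bookkeeping the possible quadratic base change in the multiplicative case and making sure the divisibility $p\mid v_{\mathfrak q}(j_E)$ is unaffected. The subtlety is that $v_{\mathfrak q}(j_E)$ is computed on $K_{\mathfrak q}$ while $v(q)$ lives on the (possibly ramified quadratic) extension $K_{\mathfrak q}'$, so one has $v(q)=-e\cdot v_{\mathfrak q}(j_E)$ with $e\in\{1,2\}$; since $p\ge 5$ is coprime to $e$, the conditions $p\nmid v_{\mathfrak q}(j_E)$ and $p\nmid v(q)$ are equivalent, and similarly for divisibility. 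This is exactly where the hypothesis $p\ge 5$ is used, and I would state it explicitly. Everything else is a direct citation of the Tate curve formalism (e.g.\ \cite{Sil94}, \S V of \emph{Advanced Topics}) and the structure theory of inertia images on elliptic curves with potentially good reduction, so I would keep those parts brief and refer to the literature rather than reproduce them.
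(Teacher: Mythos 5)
Your argument is correct: the paper gives no proof of this lemma beyond the citation to Lemma 3.4 of \cite{FSasymp}, and your dichotomy (potentially good reduction gives inertia image of order dividing $24$, hence prime to $p\geq 5$; potentially multiplicative reduction handled via the Tate parameter $q$ with $v(q)=-e\,v_{\mathfrak q}(j_E)$, $e\in\{1,2\}$, so that an order-$p$ element appears in the inertia image exactly when $p\nmid v_{\mathfrak q}(j_E)$) is precisely the standard argument by which the cited lemma is proved. Two cosmetic quibbles that do not affect the conclusion: when $p\mid v(q)$ the inertia group of the (at most quadratic) extension acts trivially on $E_q[p]$, not merely ``through $\mu_p$'', and the possible order-$2$ contribution coming from a ramified quadratic twist is $-1$, hence not unipotent --- in either case the image still has order prime to $p$, as you conclude.
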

	%\begin{proof}
	%	See \cite[Lemma 3.4]{FSasymp}.
	%\end{proof}

\begin{prop}\label{prop:inertia}
	Let $\beta$ as above (the prime lying over $r$) and $p$ big enough so that $\beta \nmid p$. Let $I_\beta$ denote the inertia group corresponding to the prime $\beta$. Then $p \mid  \#\overline{\rho}_{E,p}(I_\beta)$ provided that $\gcd(r,D)=1$.
	
\end{prop}

\begin{proof} By Proposition \ref{prop:valj} we have $v_\beta(j_{E})<0$ and  we see that $$p \nmid v_\beta(j_{E})=2-2(pk-1)(r-1)=-2pkr+2pk+2r.$$ Hence the result follows from Lemma \ref{inertia}.
	\end{proof}

\section{Theoretical Results}
In this section, we provide the theoretical results we need concerning modularity, irreducibility of mod $p$ Galois representations, level lowering and Conjectures \ref{ESconj} and \ref{weakFMconj} to prove our theorems.  We follow \cite{FSasymp} and the references therein.

Let $G_{K} $ be the absolute Galois group of a number field $K$, $E/K$ be an elliptic curve and $ \overline{\rho}_{E,p}$ denote the mod $p$ Galois representation of $E$.  We use $\frakq$ for an arbitrary prime of $K$, and $G_\frakq$ and $I_\frakq$ respectively for the decomposition and inertia subgroups of $G_K$ at $\frakq$.  Let $\frakf$ be a Hilbert eigenform over $K$.  We denote the field generated by its eigenvalues by $\Q_\frakf$ and a prime of $\Q_\frakf$ above $p$ by $\overline{\omega}$.

\subsection{Modularity}

Let $K$ be a totally real ﬁeld and $E$ be an elliptic curve over $K$, we say that $E$ is
modular if there exists a Hilbert cuspidal eigenform $\frakf$ over $K$ of parallel weight
2, with rational Hecke eigenvalues, such that the Hasse–Weil L-function of $E$ is
equal to the Hecke L-function of $\frakf$. In particular, this implies that the mod $p$
Galois representations are isomorphic, which we denote by $\overline{\rho}_{E,p} \sim \overline{\rho}_{\frakf,p}$.
We will use the following modularity theorem proved by Freitas, Hung, and
Siksek in \cite[Theorem 1]{FreitasLehungSiksek}.
\begin{theorem}\label{thm:modular}Let $K$ be a totally real field. There are at most finitely many $\overline{K}$
	isomorphism classes of non-modular elliptic curves $E$ over $K$. 
    %Moreover, if $K$ is real quadratic, then all elliptic curves over $K$ are modular.
\end{theorem}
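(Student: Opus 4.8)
The statement is the main result of Freitas, Le Hung, and Siksek \cite{FreitasLehungSiksek}; I outline the plan of their proof. The strategy is a ``$3$--$5$--$7$ modularity switching'' argument in the spirit of Wiles' $3$--$5$ trick. The first ingredient is the package of modularity lifting theorems available over totally real fields (work of Kisin, Gee, Barnet-Lamb--Gee--Geraghty, Thorne, and others): for $K$ totally real, $p\in\{3,5,7\}$, and $E/K$ such that $\overline{\rho}_{E,p}$ is irreducible, remains absolutely irreducible when restricted to $G_{K(\zeta_p)}$, and satisfies the local conditions at primes above $p$ (which for $p\le 7$ hold outside an explicit finite set of cases), one concludes that $E$ is modular as soon as $\overline{\rho}_{E,p}$ is modular. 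I will call $\overline{\rho}_{E,p}$ \emph{good} when these hypotheses hold; equivalently, the projective image is not contained in a Borel subgroup, nor in a normalizer of a Cartan subgroup in the manner that would make it abelian over $K(\zeta_p)$, nor in one of the finitely many exceptional configurations excluded by the lifting theorems.

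First I would dispose of the prime $3$: since $\GL_2(\F_3)$ is soluble, the Langlands--Tunnell theorem (after raising the weight) shows $\overline{\rho}_{E,3}$ is modular whenever it is good, so $E$ is modular unless $\overline{\rho}_{E,3}$ is bad. Next I would run the classical switch at $5$: the elliptic curves $E'/K$ carrying a symplectic isomorphism $\overline{\rho}_{E',5}\cong\overline{\rho}_{E,5}$ are parametrized by a twist of the genus-zero modular curve $X(5)$; this twist has the rational point $E$ itself and so is $K$-isomorphic to $\PP^1$, hence has infinitely many $K$-points, among which one may pick $E'$ with $\overline{\rho}_{E',3}$ good. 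Then $E'$ is modular, so $\overline{\rho}_{E,5}\cong\overline{\rho}_{E',5}$ is modular, so $E$ is modular --- provided $\overline{\rho}_{E,5}$ is good. A similar (more delicate) reduction bringing in the prime $7$ handles that case, so that $E$ is modular unless $\overline{\rho}_{E,7}$ is bad as well. The net conclusion is: a non-modular elliptic curve over $K$ must have $\overline{\rho}_{E,p}$ bad \emph{simultaneously} for $p=3$, $5$, and $7$.

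Now I would translate ``$\overline{\rho}_{E,p}$ bad'' into geometry: it means $j(E)$ is the image of a $K$-rational point on one of finitely many modular curves of $p$-power level (those attached to Borel, Cartan-normalizer, or exceptional subgroups of $\GL_2(\F_p)$), each of which has genus $0$ for $p=3,5,7$ apart from a short explicit list. Imposing the three conditions together forces $j(E)$ onto a $K$-point of a component of the fibre product over $X(1)$ of one such curve at each of $3,5,7$ --- the Borel-at-all-three one being $X_0(105)$ --- and every such component has genus $\ge 2$ by the standard genus formulas. Faltings' theorem then gives only finitely many $K$-points on each, hence only finitely many $j$-invariants of non-modular elliptic curves over $K$. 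Since modularity is preserved under quadratic twist (twisting a parallel-weight-$2$ Hilbert newform by a quadratic character preserves both the weight and the rationality of the Hecke eigenvalues), and since curves with $j\in\{0,1728\}$ have complex multiplication and hence are modular, this yields only finitely many $\Kbar$-isomorphism classes of non-modular elliptic curves over $K$, which is the first assertion.

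For the real quadratic case the work above only bounds the exceptional set, whereas one must show it is empty; this is where the real difficulty lies. The plan is to take the finite list of modular curves produced by the reduction --- including the low-level curves of genus $0$ or $1$ that arise when two or more of the bad conditions are of Borel type, such as $X_0(15)$, $X_0(21)$, $X_0(35)$, together with Cartan-type curves of small level --- and to show that over every real quadratic $K$ none of them carries a point corresponding to a non-modular elliptic curve. The genus-zero members have infinitely many $K$-points, but one checks that these parametrize $\Q$-curves or CM curves, which are modular: CM curves classically, and $\Q$-curves via Ribet's associated $\GL_2$-type abelian variety together with Serre's modularity conjecture (Khare--Wintenberger). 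The genus-one members are elliptic curves of Mordell--Weil rank $0$ over $\Q$; one determines their rank and quadratic points over real quadratic fields and finds that the extra points are again cusps, CM points, or $\Q$-curves. The remaining genus $\ge 2$ curves are treated by an explicit determination of all their quadratic points (those pulled back from a quotient curve, hence $\Q$-curves, together with an explicitly computable finite residue that one inspects). The main obstacle is precisely this last stage: the explicit, uniform-over-all-real-quadratic-fields analysis of rational and quadratic points on the exceptional modular curves, and the verification that every point that genuinely occurs comes from a curve modular for an independent reason; a secondary source of bookkeeping is enumerating the subgroups of $\GL_2(\F_3)\times\GL_2(\F_5)\times\GL_2(\F_7)$ that intervene and computing the genera of the associated modular curves.
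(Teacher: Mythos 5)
This theorem is not proved in the paper you are reading: it is imported verbatim, with the proof delegated entirely to Freitas--Le~Hung--Siksek \cite{FreitasLehungSiksek} (Theorems 1 and 5 there). So there is no in-paper argument to compare against; the only question is whether your outline faithfully reflects the cited proof, and it does. The three-stage structure you describe --- (i) modularity lifting at $p\in\{3,5,7\}$ combined with Langlands--Tunnell at $3$ and the $X(5)$-, $X(7)$-twist switching arguments, reducing non-modularity to $\overline{\rho}_{E,p}$ being ``bad'' simultaneously at $3$, $5$ and $7$; (ii) reinterpreting this as a $K$-point on one of finitely many modular curves whose relevant fibre products have genus $\ge 2$, so that Faltings gives finiteness of $j$-invariants over any totally real $K$; (iii) for real quadratic $K$, an explicit determination of the rational and quadratic points on the handful of exceptional low-genus curves, checking that every point which occurs corresponds to a CM curve or a $\Q$-curve and is therefore modular for independent reasons --- is exactly the strategy of that paper. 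Be aware that what you have written is an accurate roadmap rather than a proof: it leans on the full strength of the modularity lifting theorems, on Faltings, and on a substantial body of explicit computation of quadratic points, none of which is reproduced. A couple of small imprecisions: the passage from finitely many $j$-invariants to finitely many $\Kbar$-isomorphism classes needs no appeal to twist-invariance of modularity (curves over $K$ with equal $j$-invariant are already $\Kbar$-isomorphic), and in the real quadratic analysis the exceptional list consists of pairwise-condition curves such as $X_0(15)$, $X_0(21)$, $X_0(35)$ and various split/non-split Cartan hybrids of positive genus, rather than genuinely genus-zero members. Neither affects the correctness of the outline.
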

%Furthermore, Derickx, Najman, and Siksek  proved the following result in \cite{DNS}:
%\begin{theorem} Let $K$ be a totally real cubic number field and $E$ be an elliptic curve over $K$. Then $E$ is modular.
%\end{theorem}

% Further results have been obtained for quartic totally real fields in
 %\cite{Box22}.
 %\begin{theorem}
 %	Let $E$ be an elliptic curve over a totally real
% 	quartic number field not containing a square root of $5$. Then E is modular.
% \end{theorem}
\begin{remark}
Conjecturally, all elliptic curves defined over totally real number fields are expected to be modular. Partial results towards this conjecture are known to hold true: for quadratic fields (\cite{FreitasLehungSiksek}), cubic fields (\cite{DNS}), quartic fields not containing a square root of $5$ (\cite{Box22}). 
\end{remark}

We note that the Frey elliptic curve $E$ defined in Section \ref{frey} depends on the prime $p$. A helpful trick that we will apply repeatedly is choosing 
$p$ sufficiently large, such that we can exclude finitely many problematic curves. For example, we can assume that 
$E$ is not one of the finitely many non-modular curves given by Theorem \ref{thm:modular}. We make this discussion precise in the following proposition.

\begin{proposition}\label{Problematic}
  Let $E$ be the elliptic curve defined in Section \ref{frey} and $S$ be a finite set of $j$-invariants. Then, for $p$ sufficiently large, we can assume that $j_{E}\notin S$.
\end{proposition}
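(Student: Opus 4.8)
The plan is to observe that, regarded as a function of the Frey parameter $[x:y]\in\mathbb{P}^1$, the $j$-invariant $j_{E_i}$ is a fixed non-constant rational map, depending only on $r$. Hence the condition $j_{E_i}\in S$ confines $[x:y]$ to a finite, $p$-independent set; primitivity then pins $x,y$ down up to a bounded factor, so that $z^p=\pm(x^r+y^r)/D$ ranges over a fixed finite set of rationals, and this forces $p$ to be bounded.

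\emph{Non-constancy of $j_{E_i}$.} By Lemma \ref{lem:freyinv}, $y^2=x(x-A)(x+B)$ has Legendre parameter $\lambda_i=-B/A$ and $j_{E_i}=2^8(\lambda_i^2-\lambda_i+1)^3/(\lambda_i^2(\lambda_i-1)^2)$. For $E_1$, $\lambda_1=-\tfrac{\alpha_1-\alpha_3}{\alpha_3-\alpha_2}\cdot\tfrac{f_2}{f_1}$, and for $E_2$, $\lambda_2=-\tfrac{\alpha_2-2}{\alpha_1-\alpha_2}\cdot\tfrac{f_1}{(x+y)^2}$; in each case the leading scalar lies in $K^{\times}$ (the differences $\alpha_i-\alpha_j$ with $i\neq j$ are nonzero by Lemma \ref{lem:alpha}, and $\alpha_1\neq 2$ since $\zeta_r\neq 1$), while the binary quadratic forms $f_1,f_2$ (resp.\ $f_1$ and $(x+y)^2$) are linearly independent over $K$, so their ratio is a non-constant rational function of $[x:y]$. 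Composing with the non-constant rational function $\lambda\mapsto 2^8(\lambda^2-\lambda+1)^3/(\lambda^2(\lambda-1)^2)$ shows $j_{E_i}\colon\mathbb{P}^1\to\mathbb{P}^1$ is non-constant. (Equivalently: were it constant, $(A^2+AB+B^2)^3$ and $(ABC)^2$ would be proportional, forcing every zero of $ABC$ to have multiplicity divisible by $3$; but $ABC$ is, up to a scalar, $f_1f_2f_3$, resp.\ $(x+y)^2f_1f_2$, a product of pairwise coprime binary forms not all appearing to a multiple of the third power.)

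\emph{Reduction and conclusion.} Since a non-constant rational map $\mathbb{P}^1\to\mathbb{P}^1$ attains each value at only finitely many points, there is a finite set $F\subset\mathbb{P}^1(\mathbb{Q})$, depending only on $r$ and $S$, with $j_{E_i}\in S\Rightarrow[x:y]\in F$. Let $(x,y,z)$ be a non-trivial primitive solution of $x^r+y^r=Dz^p$ with $j_{E_i}\in S$; then $[x:y]\in F$ (if $[x:y]\notin\mathbb{P}^1(\mathbb{Q})$ there is nothing to prove), say $[x:y]=[a:b]$ with $\gcd(a,b)=1$, so $(x,y)=x'(a,b)$ for a nonzero integer $x'$. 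A common prime factor of $x,y$ divides $x^r+y^r=Dz^p$ but not $z$, hence divides $D$, so $x'^r\mid D$; thus $z^p=x'^r(a^r+b^r)/D$ belongs to a finite set $M=M(r,D,S)\subset\mathbb{Q}$ independent of $p$. Once $p>\log_2\bigl(\max\{|m|:m\in M,\ m\neq 0\}\bigr)$, an identity $z^p=m\in M$ forces $z\in\{0,\pm1\}$: if $z=0$ then $a=-b$, so $a=\pm 1$ and the solution is trivial; if $z=\pm1$ then $x'^r(a^r+b^r)=\pm D$. In the regime where $E_2$ is defined the latter is impossible, since $r\mid z$; otherwise it constrains $(x,y,z)$ to a fixed finite set of triples, independent of $p$. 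We conclude that, for all $p$ larger than an appropriate bound $\mathcal{B}(r,D,S)$, no such solution exists, i.e.\ $j_{E_1},j_{E_2}\notin S$.

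The only delicate point is the last step's residual case $z=\pm1$ for $E_1$: it amounts to checking that none of the finitely many $[a:b]\in F$ (with the bounded values of $x'$) yields a genuine non-trivial solution of $x^r+y^r=\pm D$ for the fixed $D$ --- a finite task, and vacuous whenever $r\mid z$ is imposed. (One can alternatively obtain the finiteness of the admissible $[x:y]$ from the machinery of Section \ref{frey}: for $p$ large, $j_{E_i}=j_0\in S$ forces, via Corollary \ref{cor:fjxy}, that $ABC$ is supported on the fixed set of primes above $2rD$ --- any further prime $\mathfrak{q}\mid ABC$ would contribute $v_{\mathfrak{q}}(j_{E_i})\le -2p$ to the bounded quantity $v_{\mathfrak{q}}(j_0)$ --- after which an $S$-unit equation among the linear factors of $f_1f_2f_3$, resp.\ $(x+y)f_1f_2$, confines $[x:y]$ to a finite set.)
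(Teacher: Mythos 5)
Your argument is correct and is essentially the paper's own proof: both reduce $j_{E_i}\in S$ to finitely many values of the Legendre parameter $\lambda$, hence finitely many values of $x/y$ (using $\alpha_1\neq\alpha_2$, resp.\ $\alpha_1\neq 2$, exactly as in your non-constancy step), hence finitely many pairs $(x,y)$, and then bound $p$ from $x^r+y^r=Dz^p$. The residual case $|z|=1$ that you flag as a ``delicate point'' requiring a finite check is silently skipped in the paper's proof (which simply asserts that $p$ must be bounded once $x^r+y^r$ lies in a finite set), so your write-up is, if anything, the more careful of the two.
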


\begin{proof}
      We denote the elements of the finite set $S$ as $j_1,\cdots, j_n$. Let $E$ be the Frey elliptic curve defined in Section \ref{frey} and recall the quantities $A=(\alpha_1-\alpha_2)f_0, B=(\alpha_2-2)f_1, C=(2-\alpha_1)f_2$.
     
     Note that the fact that $x,y,z$ are non-trivial implies that $A\neq 0$.  Let $\lambda=-B/A$, then the $j$-invariant of the Frey elliptic curve $E$ is $j(\lambda)=\frac{2^8(\lambda^2-\lambda+1)^3}{\lambda^2(\lambda-1)^2}$.  Suppose that $j(\lambda)=j_i\in S$, which can be viewed as a degree $6$ equation in $\lambda$. This equality holds true for finitely many values of $\lambda=-B/A$. 
     Rearranging the terms we get that
	$$\frac{f_1}{f_0}=\frac{x^2+y^2+\alpha_1xy}{(x+y)^2}= -\lambda \frac{\alpha_1-\alpha_2}{\alpha_2-2}.$$
The right-hand side of the equality is a constant multiple of $\lambda$ and hence, a member of a finite set. Now let $w=x/y$. This gives us the fact that $\frac{w^2+\alpha_1 w+1}{(w+1)^2}=-\lambda \frac{\alpha_1-\alpha_2}{\alpha_2-2}$ has finitely many solutions when $w\neq -1.$  Therefore $w=x/y$ has finitely many possibilities. Recall that $x$ and $y$ are coprime integers. Therefore, $x$ and $y$ are members of a finite set of integers. Since $r$ is a fixed rational prime, $x^r+y^r$ is a member of a finite set of integers too. Therefore, if  $(x,y,z)$ is a non-trivial, primitive integer solution of the equation $x^r+y^r=Dz^p$, it follows that $p$ has to be bounded. We quickly note that the assumption that $z\neq \pm 1$ was essential for this step. Thus by taking $p$ sufficiently large, we can assume that $j_E \notin S$.
  
\end{proof}

\begin{cor}\label{cor:modular}
    If $(x,y,z) \in \mathbb Z^3$ is a non-trivial primitive solution of $x^r+y^r=Dz^p$ where $p$ is sufficiently large, then the elliptic curve $E$ is modular. 
\end{cor}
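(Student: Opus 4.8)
The plan is to deduce this immediately from the finiteness statement for non-modular elliptic curves over totally real fields (Theorem \ref{thm:modular}) together with the ``$j$-invariant avoidance'' result Proposition \ref{Problematic}. First I would set $S$ to be the set of $j$-invariants of all elliptic curves over $K$ that fail to be modular, where $K=\Q(\zeta_r+\zeta_r^{-1})$ is the totally real field of degree $(r-1)/2$ fixed in Section \ref{frey}. Since $r>5$ we have $(r-1)/2\geq 3$, so the quantities $f_0,f_1,f_2,f_3$, and hence the elements $A,B,C$ defining $E_1$ (and $E_2$, in the case $r\mid z$), make sense and lie in $\mathcal O_K$; thus $E_1$ and $E_2$ are honest elliptic curves over $K$, and $K$ is of the type to which Theorem \ref{thm:modular} applies. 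By Theorem \ref{thm:modular} there are only finitely many $\overline{K}$-isomorphism classes of non-modular elliptic curves over $K$, and the $j$-invariant is constant on each such class, so $S$ is a \emph{finite} subset of $K$.

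Next I would apply Proposition \ref{Problematic} to this particular finite set $S$. It yields a constant, depending only on $r$, $D$ and $S$ (hence, ultimately, only on $r$ and $D$), such that whenever $p$ exceeds it and $(x,y,z)\in\Z^3$ is a non-trivial primitive solution of $x^r+y^r=Dz^p$, we have $j_{E_1}\notin S$ and $j_{E_2}\notin S$. By the very definition of $S$, an elliptic curve over $K$ whose $j$-invariant lies outside $S$ is modular; applying this to $E_1$ and to $E_2$ gives the assertion.

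There is essentially no obstacle internal to this deduction; the substance lies entirely in the two inputs. The two points that deserve a word are (i) the finiteness of $S$, which is precisely Theorem \ref{thm:modular} combined with the fact that $\overline{K}$-isomorphic elliptic curves have equal $j$-invariant, and (ii) that Proposition \ref{Problematic} is applied with $S$ allowed to depend on $r$, so that the resulting bound on $p$ may (and does) depend on $r$ as well as on $D$. Finally, I would flag that this bound is \emph{non-effective}: it inherits the ineffectivity of the finiteness in Theorem \ref{thm:modular} (and that of the bound in Proposition \ref{Problematic}), which is the ultimate source of the non-effectivity of the constants $\cB_r$, $\cB'_r$, $\cB''_r$ in the main theorems.
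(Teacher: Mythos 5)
Your proof is correct and follows exactly the paper's argument: take $S$ to be the (finite, by Theorem \ref{thm:modular}) set of $j$-invariants of non-modular elliptic curves over $K$ and apply Proposition \ref{Problematic}. The extra details you supply (constancy of $j$ on $\overline{K}$-isomorphism classes, the dependence of the bound on $r$, non-effectivity) are accurate elaborations of the same route.
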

\begin{proof}
    By Theorem \ref{thm:modular}, there are only finitely many non-modular $j$-invariants since $K$ is totally real. Now apply Proposition \ref{Problematic} with $S$ being the finite set of all non-modular $j$-invariants.
\end{proof}

\subsection{Irreducibility of mod $p$ representations of elliptic curves}
In this section, we state the result, which assures that the mod $p$ Galois representations associated with the Frey elliptic curve are irreducible. This is needed for the level lowering step. The below theorem
can be found in \cite{FSirred} as Theorem 2 deduced from the work of David, Momose and Merel’s
uniform boundedness theorem.
\begin{theorem}\label{thm:irr} Let $K$ be a Galois totally real field. There is an effective constant $C_K$,
	depending only on $K$, such that the following holds: If $p > C_K$ is a prime and $E/K$ is an
	elliptic curve which is semistable at all $\frakq\mid  p$, then $\overline{\rho}_{E,p}$is irreducible.
\end{theorem}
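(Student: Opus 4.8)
The plan is to argue by contradiction via the theory of isogeny characters, following the circle of ideas of Momose and David on rational points of $X_0(p)$ over number fields together with Merel's uniform boundedness theorem; this is precisely the content of \cite[Theorem 2]{FSirred}, so at the level of the present paper one would simply cite it, but here is the shape of the argument. Suppose $p$ is large and $\overline{\rho}_{E,p}$ is reducible, so there is an exact sequence of $G_K$-modules $0\to\mathbb{F}_p(\theta)\to E[p]\to\mathbb{F}_p(\theta')\to 0$ with isogeny characters $\theta,\theta'\colon G_K\to\mathbb{F}_p^\times$ satisfying $\theta\theta'=\overline{\chi}_p$, the mod-$p$ cyclotomic character. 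The goal is to force $p$ below an effective constant $C_K$ depending only on $K$.

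First I would do the local analysis at the primes $\frakq\mid p$, which is exactly where semistability is used. Since $e(\frakq\mid p)\le[K:\mathbb{Q}]<p-1$ for $p$ large, Raynaud's classification of finite flat group schemes applies; in particular, if $E$ had good \emph{supersingular} reduction at some $\frakq\mid p$, then $\overline{\rho}_{E,p}|_{I_\frakq}$ would be irreducible (the fundamental characters of level $2$ have order exceeding $p-1$), contradicting global reducibility. Hence $E$ is ordinary or multiplicative at every $\frakq\mid p$, and the connected--\'etale sequence (resp. the Tate uniformization) forces $\theta|_{I_\frakq}\in\{\mathbf 1,\ \overline{\chi}_p|_{I_\frakq}\}$. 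After possibly swapping $\theta$ and $\theta'$ one records, for each $\frakq\mid p$, an exponent $a_\frakq\in\{0,1\}$ with $\theta^{12}|_{I_\frakq}=\overline{\chi}_p^{12a_\frakq}|_{I_\frakq}$. Next I would treat $\frakq\nmid p$: at a semistable $\frakq$ the character $\theta|_{I_\frakq}$ is trivial (the multiplicative case contributes $\mathbf 1$ or $\overline{\chi}_p$, and $\overline{\chi}_p$ is unramified away from $p$), while at an additive $\frakq$ inertia acts on $E[p]$ through a group whose order is bounded independently of $p$, so a fixed power $\theta^{m}$ (one may take $m=12$, enlarging at the places above $2,3$ if needed) is trivial on $I_\frakq$. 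The outcome of these two steps is that $\theta^{m}$ is unramified outside $p$, tamely ramified above $p$ with prescribed inertia type $\overline{\chi}_p^{m a_\frakq}$, and valued in a cyclic group of order $p-1$.

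The final step is the globalization. Exploiting the action of $\Gal(K/\mathbb{Q})$ (this is where the hypothesis that $K$ is Galois enters) and class field theory over $K$, the standard arguments of Momose and David show that, once $p$ exceeds an effective bound depending only on $K$, one is forced into a short list of exceptional situations: roughly, $\theta^{12}=\overline{\chi}_p^{6}$; or $\theta$ (or $\theta'$) cuts out an everywhere-unramified abelian extension of $K$, which is controlled by $\#\Cl(K)$; or $E$ has potential complex multiplication by an order in an imaginary quadratic field linked to $K$. The first case is eliminated for large $p$ by a Frobenius-trace argument at primes of $K$ split over $\mathbb{Q}$ (the forced congruence conditions on the $a_\frakq$ are incompatible with the Hasse--Weil bound); the unramified-extension case is bounded by $\#\Cl(K)$; and the CM case is eliminated for large $p$ by Merel's uniform boundedness theorem together with the description of CM points of $X_0(p)$ over $K$. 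In every case $p\le C_K$, the desired contradiction.

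I expect the main obstacle to be exactly this last step. Steps one and two are clean and rest only on semistability at $p$ and on $p\gg[K:\mathbb{Q}]$; the real difficulty is converting ``$\theta^{12}$ is unramified outside $p$ with controlled inertia at $p$'' into an explicit bound on $p$ that is \emph{uniform} across all of Momose's exceptional cases (self-dual-up-to-twelfth-powers characters, unramified class-field cases, CM cases). This bookkeeping is the technical heart of the matter and is precisely what is imported from the work of David and Momose; it is also the reason the constant $C_K$, although effective, is intricate.
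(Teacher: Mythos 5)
Your proposal matches the paper exactly: the paper gives no proof of this theorem but simply cites it as Theorem 2 of \cite{FSirred}, which is deduced from the work of David and Momose together with Merel's uniform boundedness theorem, and you correctly identify this. Your sketch of the underlying isogeny-character argument (local analysis at $\frakq\mid p$ via semistability and Raynaud, killing ramification away from $p$ by passing to $\theta^{12}$, then the Momose--David case analysis using the Galois hypothesis, class field theory, and Merel for the CM case) is an accurate summary of how that cited result is actually proved.
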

Note that the above theorem is also true for any totally real field by replacing $K$
by its Galois closure.

\begin{rmk}In our case, since $r$ is fixed the degree and class number of $K$ are fixed.  Moreover, by Proposition \ref{prop:semi2}, $E$ is semistable for $p > r$. Therefore, by the above theorem we get an explicit bound where $\bar{\rho}_{E,p}$ is irreducible for $p$ big enough.
\end{rmk}
\subsection{Level Lowering}
We present a level lowering result proved by Freitas and Siksek in \cite{FSasymp} derived
from the work of Fujira, Jarvis, and Rajaei. This should be viewed as a generalization of Ribet's Level Lowering Theorem and has the role of relating our Frey elliptic curve to a Hilbert newform with level which is independent of the putative solution.
Recall the definitions of $\mathcal M_p$ and  $\mathcal N_p$.  For an elliptic curve $E/K$ of conductor $\mathcal{N}_E$, define

		$$\mathcal{M}_p=\prod_{\substack{\mathfrak{q}\mid  \mid  \mathcal{N}_E \\ p\mid  v_{\mathfrak{q}}(\Delta_\mathfrak{q})}}\mathfrak{q}, \qquad \mathcal{N}_p=\frac{\mathcal{N}_E}{\mathcal{M}_p},$$
where $p$ be a rational prime and  $\Delta_\frakq$ is the discriminant of a local minimal model for $E$ at $\frakq$.

%We use the following generalization of Ribet's Level Lowering Theorem \cite{FSasymp}. % ,which can be found in \cite{FSasymp}.

\begin{theorem}\label{levelred}(Level Lowering)
	Let $K$ be a totally real field, and $E/K$ an elliptic
	curve of conductor $\mathcal N_E$ . For a prime ideal $\frakq$ of $K$ denote by $\Delta_\frakq$ the discriminant of a local minimal model for $E$ at $\frakq$. Let $p$ be a rational prime and $\mathcal M_p, \;\mathcal N_p$ defined as above.

	Suppose the following:
	\begin{enumerate}
		\item $p \geq  5$, the ramification index $e(\frakq/p) <p-1$ for all $\frakq\mid  p$, and $\mathbb Q(\zeta_p+\zeta_p^{-1}) \nsubseteq K,$ \
		
		\item $E$ is modular,
		\item $\bar{\rho}_{E,p}$ is irreducible,
		\item E is semistable at all $\frakq\mid  p$,
		\item  $p\mid  v_\frakq(\Delta_\frakq)$ for all $\frakq\mid  p$.
	\end{enumerate}
	
	Then, there is a Hilbert eigenform $\frakf$ of parallel weight $2$ that is new at level $\mathcal{N}_p$
	and some prime $\overline{\omega}$ of $\Q_\frakf$ such that $\overline{\omega }\mid  p$ and $\bar{\rho}_{E,p} \sim \bar{\rho}_{\frakf,\overline{\omega}}$.
\end{theorem}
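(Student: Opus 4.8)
The plan is to deduce the statement from the Hilbert modular incarnations of Mazur's principle and of Ribet's level-lowering theorem, due to Jarvis and to Rajaei and underpinned by Fujiwara's $R=\mathbb{T}$ theorems, organised exactly as in \cite{FSasymp}. I would proceed as follows.

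First, invoke hypothesis (2): $E$ is modular, so there is a Hilbert cuspidal eigenform $\frakf_0$ over $K$ of parallel weight $2$ and level $\mathcal{N}_E$, with rational Hecke eigenvalues, such that $\overline{\rho}_{E,p}\sim\overline{\rho}_{\frakf_0,p}$. The target is a newform of level $\mathcal{N}_p=\mathcal{N}_E/\mathcal{M}_p$ carrying the same residual representation. Hypothesis (3), the irreducibility of $\overline{\rho}_{E,p}$, is what removes the Eisenstein obstruction that is the main failure mode of level lowering, while hypothesis (1) — $p\ge 5$, the bound $e(\frakq/p)<p-1$ for $\frakq\mid p$, and $\Q(\zeta_p+\zeta_p^{-1})\not\subseteq K$ — supplies the technical conditions under which the cited machinery (Fontaine--Laffaille theory at $p$, and absolute irreducibility of $\overline{\rho}_{E,p}$ restricted to $G_{K(\zeta_p)}$) is applicable.

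Next, analyse $\overline{\rho}_{E,p}$ locally. For $\frakq\in\supp(\mathcal{M}_p)$ the definition of $\mathcal{M}_p$ forces $\frakq\|\mathcal{N}_E$, so $E$ has multiplicative reduction at $\frakq$; realising $E/K_\frakq$ as a Tate curve with parameter $q_\frakq$ gives $v_\frakq(q_\frakq)=v_\frakq(\Delta_\frakq)$, and since $p\mid v_\frakq(\Delta_\frakq)$ the class of $q_\frakq$ in $K_\frakq^{\times}/(K_\frakq^{\times})^p$ is represented by a unit. Hence the extension of $\mathbb{F}_p$ by $\mu_p$ describing $\overline{\rho}_{E,p}|_{G_\frakq}$ is unramified, i.e. $\overline{\rho}_{E,p}|_{I_\frakq}$ is trivial. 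The same Tate-curve computation at the primes $\frakq\mid p$, now using hypotheses (4) and (5) (good reduction, or multiplicative reduction with $p$-divisible valuation), shows $\overline{\rho}_{E,p}$ is finite flat at every such $\frakq$ — this is precisely the input that lets one keep the weight equal to $2$ rather than raise it.

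Finally, strip the primes of $\mathcal{M}_p$ from the level one at a time: at each $\frakq\in\supp(\mathcal{M}_p)$ the triviality of $\overline{\rho}_{E,p}|_{I_\frakq}$, together with Jarvis's Mazur principle when $\frakq\nmid p$ and Rajaei's lowering result when $\frakq\mid p$, produces an eigenform of strictly smaller level with the same $\overline{\rho}_{E,p}$; the finite-flatness at $p$ keeps the weight at $2$ and produces a prime $\overline{\omega}\mid p$ of the coefficient field tracking the representation. Iterating over $\supp(\mathcal{M}_p)$ yields the desired Hilbert eigenform $\frakf$ new at level $\mathcal{N}_p$ with $\overline{\rho}_{E,p}\sim\overline{\rho}_{\frakf,\overline{\omega}}$. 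The genuine difficulty lies entirely inside those cited theorems — the patching/$R=\mathbb{T}$ arguments, the control of the weight at $p$, and the bookkeeping of the hypotheses collected in (1) — so in the paper we simply cite \cite{FSasymp} and the references therein.
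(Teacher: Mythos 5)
The paper offers no proof of this theorem---it is quoted verbatim from \cite{FSasymp}---and your sketch reproduces precisely the argument given there: modularity supplies an eigenform of level $\mathcal{N}_E$, the Tate-parameter computation using $p\mid v_\frakq(\Delta_\frakq)$ shows $\overline{\rho}_{E,p}$ is unramified at the primes of $\mathcal{M}_p$ away from $p$ and finite flat at the primes above $p$ (hypotheses (4),(5)), and the Fujiwara--Jarvis--Rajaei machinery, applicable under the conditions collected in (1) and the irreducibility (3), then strips $\mathcal{M}_p$ from the level. This is essentially the same approach as the paper's (a citation of \cite{FSasymp}); the only minor quibble is attribution---Jarvis's Mazur principle and Rajaei's Ribet-type theorem both treat primes $\frakq\nmid p$, while keeping the weight $2$ and the level prime to $p$ rests on Fujiwara's level optimization---which does not affect correctness.
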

\subsection {Conjectures}\label{sec: conj}
We will make use of the following famous conjecture which can be viewed as a converse of modularity. 
\begin{conj} \label{ESconj} (Eichler–Shimura). Let $K$ be a totally real field. Let $\frakf$  be a Hilbert
	newform of level $\mathcal N$ and parallel weight $2$, with rational eigenvalues. Then there is
	an elliptic curve $E_\frakf/K$ with conductor $\mathcal N$ having the same $L$-function as $\frakf$.
\end{conj}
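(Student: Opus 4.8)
Conjecture~\ref{ESconj} is the \emph{Eichler--Shimura conjecture} over a totally real field, which is at present open; in this paper it is invoked only as a working hypothesis, so what follows is not a proof but a description of the natural line of attack and of the obstruction to it. Over $\Q$ the statement is a classical theorem -- the Hecke-stable quotient $A_\frakf\subseteq J_0(N)$ attached to a weight-$2$ newform of level $N$ is an elliptic curve of conductor $N$ exactly when the Hecke eigenvalues are rational -- and the plan over a general totally real field $K$ is to imitate this. First I would attach to the Hilbert newform $\frakf$ its compatible system of $p$-adic Galois representations $\rho_{\frakf,p}\colon G_K\to\GL_2(\overline{\Q}_p)$; such a system exists by work of Carayol, Taylor, Blasius--Rogawski and others, it is odd, unramified outside $\mathcal N p$, and satisfies $\tr\rho_{\frakf,p}(\Frob_\frakq)=a_\frakq(\frakf)$ and $\det\rho_{\frakf,p}(\Frob_\frakq)=\mathrm{Nm}(\frakq)$ for all $\frakq\nmid\mathcal N p$.

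Next, using that all the eigenvalues $a_\frakq(\frakf)$ lie in $\Q$, I would try to show that this two-dimensional system is ``motivic'' in a strong sense: that $\rho_{\frakf,p}$, up to twist, is realized inside the $p$-adic Tate module of an abelian variety $B/K$ of $\GL_2$-type, and then, since the coefficient field is $\Q$ rather than a larger totally real field, conclude that $B$ is $K$-isogenous to an elliptic curve $E_\frakf$. The remaining steps are bookkeeping: local--global compatibility of $\rho_{\frakf,p}$ at the primes dividing $\mathcal N$ and at $p$ pins down $\cond(E_\frakf)=\mathcal N$, and equality of the two compatible systems gives $L(E_\frakf,s)=L(\frakf,s)$, as required.

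The decisive obstacle is the geometric realization step. Over $\Q$ it is supplied for free by the Jacobian of the modular curve $X_0(N)$; over a totally real field of degree greater than $1$ there is no modular curve to play that role, and the only available substitute is Taylor's potential-modularity machinery, which produces a modular elliptic curve (or an identification with a Hilbert modular form) only after base change to some auxiliary totally real extension $L/K$, with no mechanism to descend the resulting curve to $K$ itself while keeping track of the conductor. Closing this ``potential versus actual'' gap -- equivalently, genuinely constructing $E_\frakf$ over $K$ with the correct conductor -- is precisely what is missing, and is the reason the statement has to be assumed rather than proved.
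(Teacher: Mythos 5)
You have correctly diagnosed the situation: the statement is a conjecture, the paper supplies no proof of it, and it is invoked only as a hypothesis (in the Second Main Theorem), so no proof attempt can be graded against a paper argument here. Your sketch of the expected strategy (compatible systems attached to $\frakf$, realization in an abelian variety of $\GL_2$-type, descent to an elliptic curve when $\Q_\frakf=\Q$, then local--global compatibility to pin down the conductor and the $L$-function) is a fair description of what is believed, and your identification of the geometric realization step as the crux is accurate. One correction worth making: the known partial cases that the paper actually records in Theorem \ref{thm:EC} do not come from Taylor's potential-modularity machinery but from quaternionic Shimura curves via Jacquet--Langlands; when $[K:\Q]$ is odd (Blasius, Hida), or when the automorphic representation is discrete series at some finite place --- which is what the inertia condition $p\mid \#\bar\rho_{E,p}(I_\frakq)$ in the Freitas--Siksek criterion detects, and what Proposition \ref{prop:inertia} supplies for the second Frey curve --- the Galois representation is realized in the Jacobian of a suitable Shimura curve, and one does get a genuine elliptic curve over $K$ of the right conductor. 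The truly open case is even degree with $\frakf$ principal series at every finite place, where no Shimura curve (nor potential modularity plus descent) is available; this is exactly why the paper can avoid the conjecture when $r\equiv 3\pmod 4$ or when $r\mid z$, but must assume it otherwise.
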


It is known that this conjecture holds in some cases. Here we try to summarize some of the the results in this direction.

\begin{theorem} \label{thm:EC}
	Let $K$ be a totally real field and let $\frakf$  be a Hilbert
	newform over $K$ of level $\mathcal N$ and parallel weight $2$, such that $\Q_\frakf = \Q$. 
	
	\begin{enumerate}
		\item If the degree of $K$ over $\Q$ is odd then there is an elliptic curve $E_\frakf/K$ of conductor $\mathcal N$ with the same $L$-function as $\frakf$. (Blasius \cite{Blasius}, Hida \cite{Hida})
		
		\item Let $p$ be an odd prime and $\frakq\nmid p$ be a prime of $K$.  Suppose $\bar{\rho}_{E,p}$ is irreducible, and $\bar{\rho}_{E,p} \sim \bar{\rho}_{\frakf,p}$ with the following properties:
		\begin{enumerate}
			\item $E$ has potentially multiplicative reduction at $\frakq$,
			\item $p \mid  \# \bar{\rho}_{E,p}(I_\frakq) $ where $I_\frakq$ denotes the inertia group at $p$,
			\item $p \nmid (\text{Norm}_{K/\Q}(\frakq) \pm 1)$,
		\end{enumerate}
		then there is an elliptic curve $E_\frakf/K$ of conductor $\mathcal N$ with the same $L$-function as $\frakf$. (Freitas-Siksek Corollary 2.2 in \cite{FSasymp})
	\end{enumerate}
\end{theorem}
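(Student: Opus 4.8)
The plan is to prove the two parts separately; each is an instance of the quaternionic realization of Hilbert modular forms, and part (2) differs from part (1) only in how one arranges a finite ramified place for the auxiliary quaternion algebra.

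For part (1), I would begin from the observation that when $d=[K:\Q]$ is odd, $d-1$ is even, so there exists a quaternion algebra $B/K$ ramified exactly at all but one archimedean place and split at every finite prime; its associated Shimura variety is a curve $X^B$. A holomorphic Hilbert newform of parallel weight $2$ is discrete series at each archimedean place, so the Jacquet--Langlands correspondence transfers $\frakf$ to an automorphic form on $B^\times$, realized in the first cohomology of $X^B$. The Eichler--Shimura construction then attaches to $\frakf$ an abelian variety $A_\frakf$ isogenous to a factor of $\Jac(X^B)$ of dimension $[\Q_\frakf:\Q]$, so $\Q_\frakf=\Q$ forces $\dim A_\frakf=1$; comparing Hecke eigenvalues with Frobenius traces identifies $L(A_\frakf,s)=L(\frakf,s)$ and pins the conductor of $A_\frakf$ to $\mathcal N$. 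I would cite Blasius \cite{Blasius} and Hida \cite{Hida} for the precise statements, in particular the field of definition, the conductor, and the descent of $A_\frakf$ to an elliptic curve over $K$.

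For part (2), when $d$ is even one cannot take a quaternion algebra unramified at all finite primes, so one needs $\frakf$ to be discrete series at some finite prime; hypotheses (a)--(c) are there to supply one. The key step I would carry out is to analyze $\rho_{\frakf,p}|_{G_\frakq}$ using $\bar\rho_{\frakf,p}\sim\bar\rho_{E,p}$: hypothesis (b) forces $\bar\rho_{\frakf,p}(I_\frakq)$ to have order divisible by $p$, which (since $\frakq\nmid p$) rules out $\frakf$ being unramified at $\frakq$, while hypothesis (c), $p\nmid(\text{Norm}_{K/\Q}(\frakq)\pm1)$, rules out the ramified principal series and supercuspidal local types, whose mod-$p$ inertial images have order dividing $\text{Norm}_{K/\Q}(\frakq)\pm1$. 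Hence $\pi_{\frakf,\frakq}$ is an unramified twist of the Steinberg representation, i.e.\ $\frakf$ is special at $\frakq$. Then I would pick $B/K$ ramified exactly at all archimedean places together with $\frakq$ (an even set, as $d$ is even), transfer $\frakf$ via Jacquet--Langlands, and rerun the Shimura-curve construction of part (1) to obtain $E_\frakf/K$ with $L(E_\frakf,s)=L(\frakf,s)$ and conductor $\mathcal N$. This is Corollary 2.2 of \cite{FSasymp}.

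I expect the main obstacle to be the local analysis in part (2): making the case distinction for $\rho_{\frakf,p}|_{G_\frakq}$ precise and matching the image-of-inertia computation with local Langlands to reach the Steinberg conclusion, together with checking that the conductor of $A_\frakf$ is exactly $\mathcal N$ rather than merely controlled away from the delicate primes of the quaternionic construction. Since both points are dealt with in \cite{FSasymp} and its references, in practice I would invoke that corollary and the theorems of Blasius and Hida directly rather than reconstructing their proofs.
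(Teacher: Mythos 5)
The paper offers no proof of this theorem: both parts are quoted verbatim from the literature (Blasius and Hida for part (1), Freitas--Siksek Corollary 2.2 for part (2)), so your closing decision to invoke those references directly is exactly what the paper does, and your sketch of the underlying quaternionic/Jacquet--Langlands mechanism is a fair account of how those references argue. One slip in your sketch of part (2): the auxiliary quaternion algebra must be \emph{split at exactly one archimedean place} (otherwise the associated Shimura variety is zero-dimensional and has no Jacobian to extract an elliptic curve from), so it should be ramified at all but one archimedean place together with $\frakq$ --- that is $(d-1)+1=d$ places, which is indeed even when $d$ is even --- and not at ``all archimedean places together with $\frakq$'', which would be $d+1$ places, an odd set, so no such quaternion algebra even exists. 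Relatedly, part (2) as stated does not actually require $[K:\Q]$ to be even; the point of hypotheses (a)--(c) is to force $\pi_{\frakf,\frakq}$ to be special (your analysis of why (b) rules out the unramified case and (c) rules out the other ramified local types is the right idea), which makes the Shimura-curve construction available in every degree. Since these details live inside the cited proofs rather than in anything the paper verifies itself, they do not affect how the theorem is used in the main arguments.
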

A famous conjecture in the field (e.g. \cite{MR1638494FreyMazur}) asserts that if $p$ is large enough, the modulo $p$ Galois representation attached to an elliptic curve determines it up to isogeny.

\begin{conj}\label{weakFMconj} (Weak Frey--Mazur). Let $K$ be a number field and $E_0$ be a fixed elliptic curve over $K$. Then there is a constant $N_{E_0}$, that depends only on $E_0$ such that if $p>N_{E_0}$ and $E/K$ is an elliptic curve with $\bar{\rho}_{E,p} \sim \bar{\rho}_{E_0,p}$ then $E$ is isogenous to $E_0$.
\end{conj}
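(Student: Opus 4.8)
The final statement is the Weak Frey--Mazur Conjecture, which is open; we outline the approach one would pursue. The idea is to translate the hypothesis $\overline{\rho}_{E,p}\sim\overline{\rho}_{E_0,p}$ into a statement about $K$-rational points on twisted modular curves, and then to rule out the ``sporadic'' such points when $p$ is large. \textbf{First} one reduces to the case that $E_0$ is non-CM and that $\overline{\rho}_{E_0,p}$ has image containing $\SL_2(\F_p)$: for non-CM $E_0$ this fails for only finitely many $p$ by Serre, and those, together with the $p\le 5$ for which the relevant modular curve has genus $0$, are absorbed into $N_{E_0}$; the CM case is treated separately using that the image then lies in the normaliser of a Cartan subgroup. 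Since the $K$-isogeny class of $E_0$ is finite (Faltings), it suffices to show that for $p>N_{E_0}$ every $E/K$ with $E[p]\cong E_0[p]$ as $G_K$-modules lies in that isogeny class.

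\textbf{Next}, for such $p$, a $G_K$-isomorphism $\phi\colon E[p]\xrightarrow{\sim}E_0[p]$ scales the Weil pairing by a fixed $c\in\F_p^\times$, and the class of $c$ modulo squares splits the problem into two twisted modular curves $X^{+}_{E_0}(p)$ and $X^{-}_{E_0}(p)$ over $K$ --- the symplectic and anti-symplectic twists of $X(p)$ determined by $\overline{\rho}_{E_0,p}$. A non-cuspidal $K$-point of $X^{\pm}_{E_0}(p)$ is precisely such a curve $E$ (up to the usual ambiguities), the points arising from the finite isogeny class of $E_0$ are the ``trivial'' ones, and the conjecture says these exhaust the non-cuspidal $K$-points once $p>N_{E_0}$. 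As the genus of $X(p)$ grows like $p^{3}$, for $p\ge 7$ these twists have genus $\ge 2$, so Faltings' theorem already gives finiteness of $X^{\pm}_{E_0}(p)(K)$ for each fixed $p$; the whole difficulty is uniformity in $p$.

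\textbf{To attack the sporadic points} I would combine an arithmetic and a geometric input. Arithmetically: if $E$ is not $K$-isogenous to $E_0$, then by Faltings--Serre there is a prime $\frakq$ of good reduction for both with $a_\frakq(E)\ne a_\frakq(E_0)$, and an effective Faltings--Serre/Chebotarev estimate (under GRH, or unconditionally with weaker bounds) controls the least such $\frakq$ in terms of $\mathcal{N}_E$, $\mathcal{N}_{E_0}$ and $[K:\Q]$; but $\overline{\rho}_{E,p}\sim\overline{\rho}_{E_0,p}$ forces $p\mid a_\frakq(E)-a_\frakq(E_0)$, and the Hasse bound $|a_\frakq(E)-a_\frakq(E_0)|\le 4\sqrt{\Norm(\frakq)}$ then bounds $p$ in terms of $\mathcal{N}_E$. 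The obstruction is that $\mathcal{N}_E$ is not a priori bounded: $E$ may have multiplicative reduction at primes $\frakq\nmid p\,\mathcal{N}_{E_0}$, necessarily with $p\mid v_\frakq(\Delta_E)$, hence $v_\frakq(\Delta_E)\ge p$. Geometrically one would use exactly this: such an $E$ gives a $K$-point of $X^{\pm}_{E_0}(p)$ reducing to a cusp at those primes, and one seeks a height bound on the non-trivial $K$-points from the explicit geometry of $X(p)$ --- its cusps, the $\SL_2(\F_p)$-action, its growing gonality --- via Runge's method or a Chabauty--Coleman-type argument adapted to the twist; combined with $v_\frakq(\Delta_E)\ge p$ at the new bad primes this should force $p\le N_{E_0}$.

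\textbf{The main obstacle} is precisely the last step: making Faltings' finiteness effective and uniform in $p$, i.e.\ showing that the sporadic non-cuspidal $K$-points of $X^{\pm}_{E_0}(p)$ disappear beyond an explicit threshold depending on $E_0$ --- equivalently, ruling out ``accidental'' mod-$p$ isomorphisms $E[p]\cong E_0[p]$ between non-isogenous elliptic curves for large $p$. This is the heart of the Frey--Mazur philosophy and remains open over number fields; the analogous statement is known over function fields (Bakker--Tsimerman), but the approach above would require either a genuinely new effective bound on the heights of rational points of these twisted modular curves or an unconditional input of Lang--Vojta type.
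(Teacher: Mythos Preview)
The paper does not prove this statement: Conjecture~\ref{weakFMconj} is stated precisely as a conjecture, with no proof or proof sketch, and is then \emph{assumed} as a hypothesis in both Main Theorems. There is therefore nothing in the paper to compare your proposal against.

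You have correctly recognised this yourself --- your write-up is explicitly labelled a ``sketch of a proof strategy'' for an open problem, and you are candid that the decisive step (uniform effectivity of Faltings on the twisted modular curves $X^{\pm}_{E_0}(p)$) is exactly what is missing. As a survey of the standard translation (symplectic/anti-symplectic twists of $X(p)$, growth of genus, Faltings--Serre trace comparisons, the obstruction from unbounded conductor) your outline is accurate and well-organised. But it is not, and does not claim to be, a proof: the ``geometric input'' you invoke --- a uniform-in-$p$ height bound on sporadic $K$-points of $X^{\pm}_{E_0}(p)$ via Runge or Chabauty-type methods --- is precisely the open content of the conjecture, so the argument is circular at that point. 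This is consistent with the paper's treatment, which simply takes the conjecture as a black box.
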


We note that this is a known consequence of the \textit{abc}-conjecture. 
The strong version of the conjecture asserts that the constant $C:=N_{E_0}$ is independent of the starting curve $E_0$.
Over the rationals (i.e. $K=\Q$), Cremona and Freitas \cite[Theorem 1.3.]{cremona} have shown that for curves of conductor at most 500 000, the strong version of the Frey--Mazur Conjecture holds with $C=17$.

\section {Proofs of the Main theorems}

Firstly, we recall that $p$ is large enough such that we can assume that $D$ contains no $p$th power.
Let $(x,y,z)$ be a primitive non-trivial solution to $x^r+y^r=Dz^p.$ Moreover, whenever 
$r \equiv 1 \bmod 4$ we assume that $r\mid  z$.
%We have two cases:
% \begin{enumerate}
%    \item if $r\equiv 3 \mod 4$, then we define the Frey curve using $A,B,C$ as in Section \ref{curve2}. 
%    \item if $r \equiv 1 \mod 4$, then we moreover assume that $r\mid  z$ and again define the Frey curve using $A,B,C$ as in Section \ref{curve2}. 
%\end{enumerate}

\begin{remark}\label{rmk:r}
A crucial ingredient in the following proofs is applying the Eichler--Shimura correspondence. This step reduces the problem of finding solutions for Fermat-type equations, to a problem about elliptic curves with specified conductors.

As discussed in Section \ref{sec: conj}, the Eichler--Shimura conjecture is known to work only in specific cases. In the proof of the first Main Theorem, we distinguish two cases. Whenever $r\equiv 3 \mod 4$, one gets that $K:=\Q(\zeta_r +\zeta_r^{-1})$ has odd degree, and hence Theorem \ref{thm:EC} (1) tells us that the Eichler--Shimura conjecture holds unconditionally in this case. 
However, if $r \equiv 1 \mod 4$
, the situation changes. In this case, assuming $r\mid  z$
 and using an image of inertia comparison, we can show that 
$E$ satisfies conditions in Theorem \ref{thm:EC} (2), allowing us to apply the Eichler--Shimura Conjecture to conclude the proof.

\end{remark}

\subsection{Proof of First Main Theorem (Theorem \ref{thm:main1})}

Let $\frakq$ be a prime of $K$ lying over $p$. By Proposition   \ref{prop:semi2} we know 
that $E$ is semistable at all primes $\frakq\nmid 2, \beta$. Therefore by taking $p$ to be larger than an effective constant 
$\cB_{r,D}$, we can apply Theorem \ref{thm:irr} to get that 
$\bar{\rho}_{E,p}$ is irreducible. By applying Proposition \ref{prop:semi2}, Corollary \ref{cor:modular} and possibly enlarging $p$ once again, we see that the assumptions of Theorem  \ref{levelred} are satisfied.  Hence, we have that $\bar{\rho}_{E,p} \sim \bar{\rho}_{\frakf,\overline{\omega }}$ where $\frakf$ is a Hilbert eigenform of parallel weight $2$ that is new at level $\mathcal N_p$ where $\mathcal N_p$ is as given in the Proposition \ref{prop:semi2} and  some prime $\overline{\omega}$ of $\Q_\frakf$ such that $\overline{\omega }\mid  p$.  Next,
after possibly enlarging $\cB_{r,D}$, we reduce to the case when $\Q_\frakf=\Q$. This step uses standard ideas originally due to Mazur that can be found in \cite[Section 4]{BS}, and so we omit the details.

Now, we apply Theorem \ref{thm:EC} in the following manner. If $r \equiv 3 \mod 4$, then $K:=\Q(\zeta_r +\zeta_r^{-1})$ has odd degree
and we apply Theorem \ref{thm:EC} (1). If $r \equiv 1 \mod 4$ then we use the Theorem \ref{thm:EC} (2) with $\frakq$ in the statement of the theorem being $\beta$. Using the assumption $r\mid  z$, Propositions \ref{prop:valj} and \ref{prop:inertia} and taking $p$ to be larger than the $\text{Norm}_{K/\Q}(\beta) \pm 1$ we see that assumptions of Theorem \ref{thm:EC} (2) are satisfied. Either way, we get that $\bar{\rho}_{E,p} \sim \bar{\rho}_{E',p}$ for some finitely many elliptic curves $E'$ (the ones with conductor $\mathcal{N}_p$). %However if $r \equiv 1 \mod 4$, we need to use Proposition \ref{prop:inertia} and implicitly the assumption that $r\mid z$ to apply the second part(Theorem \ref{thm:EC} (2)) . Either way, we get that $\bar{\rho}_{E_i,p} \sim \bar{\rho}_{E',p}$ for some finitely many elliptic curves $E'$ (the ones with conductor $\mathcal{N}_p$). 
Using the Weak Frey--Mazur Conjecture we can conclude that $E$ is isogenous to one of finitely many elliptic curves $E'$, by possibly enlarging $p$ again. In each isogeny class there are finitely many members so we can say that $E$ is isomorphic to one of the finitely many $E'$s, whose finite set of $j$-invariants we denote by $S$. Now, by applying Proposition \ref{Problematic}, it follows that by possibly increasing $p$, we can assume that $j_{E}$ is not in this set, hence reaching a contradiction. 

\subsection{Proof of Second Main Theorem (Theorem \ref{thm:main2})}
Let $E$ be the Frey elliptic curve defined in Section \ref{curve2}.
The proof of this theorem is the same as above up to the point when we apply Theorem \ref{thm:EC}. In this case, we do not have the assumption of Theorem \ref{thm:EC} to get an Eichler--Shimura relation. Therefore we continue by assuming the Eichler--Shimura Conjecture (Conjecture \ref{ESconj}) and get that  $\bar{\rho}_{E,p} \sim \bar{\rho}_{E',p}$ for finitely many elliptic curves $E'$ (the ones with conductor $\mathcal{N}_p$). 

Then we continue as above and by using the Weak Frey--Mazur Conjecture we can conclude that $E$ is isogenous to one of the finitely many elliptic curves $E'$ and in each isogeny class there are finitely many members so that $E$ is isomorphic to one of the finitely many $E'$s whose finite set of $j$-invariants we denote by $S$. By Proposition \ref{Problematic}, for $p$ large enough, we can assume that $j_{E}$ is not in this set, hence reaching a contradiction once again.

\bibliographystyle{plain}
\bibliography{signature_(rrp).bib}
\end{document}